\documentclass[review, 3p, sort&compress, times]{elsarticle}

\usepackage[utf8]{inputenc}
\usepackage[T1]{fontenc}
\usepackage{lmodern}
\usepackage[english]{babel}

\usepackage{lineno} 

\usepackage{tumcolors}
\usepackage{tummacros}
\colorlet{blue}{tumblue}

\usepackage{booktabs} 
\usepackage{graphicx} 
\usepackage{pgfplots} 
\usepackage{amssymb, amsmath, mathtools} %
\pgfplotsset{compat=1.18}
\usepackage{hyperref} 
\usepackage{cleveref} 


\usepackage{etoolbox} 
\newcommand*\linenomathpatch[1]{%
  \cspreto{#1}{\linenomath}%
  \cspreto{#1*}{\linenomath}%
  \csappto{end#1}{\endlinenomath}%
  \csappto{end#1*}{\endlinenomath}%
}
\linenomathpatch{equation}
\linenomathpatch{gather}
\linenomathpatch{multline}
\linenomathpatch{align}
\linenomathpatch{alignat}
\linenomathpatch{flalign}

\usepackage{tabularx}

\usepackage{subcaption}
\usepackage{caption}
\usepackage[obeyFinal, backgroundcolor=white, linecolor=tumgray, textcolor=red, textsize=footnotesize]{todonotes}



\providecommand{\data}[1]{{\par\small\noindent\textbf{\textit{Data availability:}} #1}}
\providecommand{\funding}[1]{{\par\small\noindent\textbf{\textit{Funding:}} #1}}


\newcommand\titlename{Numerical simulation of dilute polymeric fluids with memory effects in the turbulent flow regime}

\hypersetup{
    pdftitle={\titlename},
	pdfauthor={Jonas Beddrich, Stephan B. Lunowa, Barbara  Wohlmuth},
	linkcolor=\linkcolor,
	urlcolor=\urlcolor,
	citecolor=\citecolor,
	menucolor=\menucolor
}

\journal{J. Comput. Phys.}

\begin{document}

\begin{frontmatter}

\title{\titlename}

\author[TUM]{Jonas Beddrich\corref{cor}}
\cortext[cor]{Corresponding author}
\ead{beddrich@ma.tum.de}
\ead[url]{orcid.org/0000-0001-9025-2292}

\author[TUM]{Stephan B. Lunowa}
\ead{stephan.lunowa@tum.de}
\ead[url]{orcid.org/0000-0002-5214-7245}

\author[TUM]{Barbara Wohlmuth}
\ead{wohlmuth@cit.tum.de}
\ead[url]{orcid.org/0000-0001-6908-6015}

\affiliation[TUM]{
    organization={Technical University of Munich, School of Computation, Information and Technology, Department~of~Mathematics},
    addressline={Boltzmannstraße 3},
    postcode={D-85748},
    city={Garching b. München},
    country={Germany}
}

\begin{abstract}
    We address the numerical challenge of solving the Hookean-type time-fractional Navier--Stokes--Fokker--Planck equation, a history-dependent system of PDEs defined on the Cartesian product of two $d$-dimensional spaces in the turbulent regime. Due to its high dimensionality, the non-locality with respect to time, and the resolution required to resolve turbulent flow, this problem is highly demanding. 

To overcome these challenges, we employ the Hermite spectral method for the configuration space of the Fokker--Planck equation, reducing the problem to a purely macroscopic model. 
Considering scenarios for available analytical solutions, we prove the existence of an optimal choice of the Hermite scaling parameter. With this choice, the macroscopic system is equivalent to solving the coupled micro-macro system. We apply second-order time integration and extrapolation of the coupling terms, achieving, for the first time, convergence rates for the fully coupled time-fractional system independent of the order of the time-fractional derivative. 

Our efficient implementation of the numerical scheme allows turbulent simulations of dilute polymeric fluids with memory effects in two and three dimensions. Numerical simulations show that memory effects weaken the drag-reducing effect of added polymer molecules in the turbulent flow regime. 
\end{abstract}

\begin{keyword}
    Navier--Stokes--Fokker--Planck equations \sep
    time-fractional partial differential equations \sep
    dilute polymeric fluids \sep
    Hermite spectral method \sep
    kernel compression method \sep
    drag-reducing agents
    \MSC[2020]{
        35Q84 \sep 
        35R11 \sep 
        76A05 \sep 
        76A10 \sep 
        76D05 \sep 
        76M10 \sep 
        76M22 \sep 
        76F65      
    }
\end{keyword}

\end{frontmatter}


\section{Introduction}
\label{sec:intro}
The drag-reducing effect of additive polymer molecules was observed for the first time by Toms in 1948 \cite{Toms1948Some}. 
Today, bio-based and synthetic drag-reducing agents are widely used in various industrial applications, including drilling fluids \cite{lu1988carboxymethyl,thombare2016guar}, sprinkler irrigation systems \cite{singh2000biodegradable}, and oil transport, most notably, in the Alaska pipeline \cite{burger1982flow}. 
While the rheological mechanisms of drag-reducing agents in blood flow are not yet fully understood \cite{li2022effect}, additive polymer molecules have been found to reduce occlusion levels in mice with sickle cell disease \cite{crompton2020drag} and to improve microvascular cerebral blood flow in rats with traumatic brain injury \cite{bragin2021addition}.
Most fascinatingly, even minute quantities ($200$--$400\,$ppm) of bio-based polymer additives in water can lead to drag reductions of up to 80\% \cite{han2017role}.

Small amounts of polymer molecules added to an incompressible solvent fluid are commonly modeled as dilute polymeric fluids.
Due to their marginal number, the mass of the polymer molecules and polymer-polymer interactions, such as entanglement, are neglected in the description of the mixture, and only polymer-solvent interactions are considered.
We denote the time with $t \in (0, T)$, the position in the macroscopic flow domain $\CoordinateSpace$ with $\Coordinate \in \CoordinateSpace$. 
The velocity $\Velocity(\Time, \Coordinate)$ and pressure $\Pressure(\Time, \Coordinate)$ of the solvent fluid are governed by the incompressible Navier--Stokes (NS) equations, whereby the influence of the polymer molecules enters as an extra-stress tensor $\ExtraStress(\Time, \Coordinate)$.
In its dimensionless form, the system reads as  
\begin{alignat}{3}
    \label{eq:Navier_Stokes}
    \PDiff{\Time} \Velocity
    + \left( \Velocity \cdot \Grad_{\Coordinate} \right) \Velocity 
    + \Grad_{\Coordinate} \Pressure 
    - \frac{\beta}{\Reynolds} \Laplace_{\Coordinate} \Velocity 
    & = \Grad_{\Coordinate} \cdot \ExtraStress \quad 
    && \text{ in } (0,T) \times \Omega, \\ 
    \Grad_{\Coordinate} \cdot \Velocity & = 0 \quad 
    && \text{ in } (0,T) \times \Omega, 
\end{alignat}
where $\Reynolds$ denotes the Reynolds number and $\beta = \eta_s / (\eta_s + \eta_p)$ the viscosity ratio of the solvent viscosity $\eta_s$ and the zero-shear-rate polymeric viscosity $\eta_p$.
In the case of a pure solvent fluid $\beta = 1$, $\ExtraStress = \Vec{0}$, and we recover the standard incompressible NS equations.

On the microscopic scale, we model a single polymer molecule as a so-called dumbbell, a pair of mass-less beads connected by a spring. 
We identify the center-of-mass of each molecule with the spatial coordinate $\Coordinate$ and denote the configuration vector, the end-to-end vector of the polymer molecule, as $\Configuration \in \ConfigurationSpace$, whereby $\ConfigurationSpace$ is the so-called configuration space.
Considering a Hookean elasticity model, the spring force $\SpringForce$ increases linearly with the extension of the polymer molecule
\begin{equation}
    \label{eq:SpringForceHookean}
    \SpringForce = \SpringConstant \Configuration, \quad \Configuration \in \ConfigurationSpace = \R^d, 
\end{equation}
with $H=1$ after non-dimensionalization.
Despite allowing for unphysical phenomena such as infinitely extended molecules, this modeling approach accurately portrays the behavior of long-chain polymer molecules \cite{herrchen1997detailed}, and it is widely used in practice for its simplicity \cite{barrett2018existence}.
For short polymer chains, commonly, the finitely extensible nonlinear elastic (FENE) model \cite{byron1987dynamics} is used, which only allows for configuration vectors within an open ball, and the spring force tends to infinity as the configuration vector approaches the boundary of the configuration space 
\begin{equation}
    \label{eq:SpringForceFene}
    \SpringForce = \frac{\SpringConstant \Configuration}{1 - |\Configuration|^2 / q_{\max}^2}, \quad 
    \Configuration \in \ConfigurationSpace = B_{q_{\max}} (0),
\end{equation}
where $q_{\max} > 0$ denotes the maximal distance between the endpoints of the polymer molecule. As $q_{\max}$ tends to infinity, we recover the Hookean spring force \cite{warner1972kinetic}. 

On the microscopic scale, the Fokker--Planck (FP) equation describes the dynamics of the polymer molecules in terms of the probability density function $\FPpdf(\Time, \Coordinate, \Configuration)$, expressing the probability that at time $\Time$, there is a polymer molecule with center-of-mass $\Coordinate$ and configuration vector $\Configuration$.
It reads as 
\begin{multline}
    \label{eq:FokkerPlanck}
    \PDiff{\Time} \FPpdf 
    = - \left( \Velocity \cdot \Grad_{\Coordinate} \right) \FPpdf 
    + \comdiff \Laplace_{\Coordinate} \FPpdf
    - \Grad_{\Configuration} \cdot \big((\Grad_{\Coordinate} \Velocity) \Configuration \FPpdf \big)
    + \frac{1}{2 \Deborah} \Grad_{\Configuration} \cdot \big(\SpringForce \FPpdf + \Grad_{\Configuration}\FPpdf \big) \quad 
    \text{ in } (0,T) \times \Omega \times \ConfigurationSpace, 
\end{multline}
where $\comdiff$ denotes the center-of-mass diffusion coefficient and the Deborah number, $\Deborah$, accounts for the relaxation time of the polymer molecules.

In the Navier--Stokes--Fokker--Planck (NSFP) system, the microscopic and macroscopic dynamics are coupled through the velocity field $\Velocity$ and velocity gradient $\Grad_{\Coordinate} \Velocity$ in the FP equation and the probability density $\FPpdf$ enters in the extra-stress tensor in the NS equations, in the form of Kramer's expression 
\begin{align}
    \label{eq:KramersExpression}
    \ExtraStress(\Time, \Coordinate) := 
    \ExtraStress(\FPpdf)  = 
    \gamma \int_\ConfigurationSpace \left( \SpringForce \Configuration \Transpose  - \Id \right) \FPpdf(\Time, \Coordinate, \Configuration) \,\Diff \Configuration, 
\end{align}
where the dimensionless quantity $\gamma = (1- \beta) / (\Deborah\, \Reynolds)$. It accounts for the viscosity contribution of the polymer molecules and their relaxation time.

While most studies assume that the stress response depends only on the current deformation rate, a dependency on the deformation history is no novel idea.
Already in 1983, Bagley and Torvik \cite{bagley1983theoretical} reinterpreted the stress from solute polymer molecules in the Rouse model \cite{rouse1953theory} as the fractional derivative of the polymer molecules' deformation history.
To account for more complex time-dependent behaviors of the mixture, \cite{fritz2024analysis} recently extended the model for dilute polymeric fluids to include trapping effects of the polymer molecules, such as configuration changes, in the form of a Riemann--Liouville time-fractional derivative.
For $\alpha \in (0,1]$, the Riemann--Liouville time-fractional derivative is defined as
\begin{align}
    \label{eq:RiemannLiouvilleFractionalDerivative}
    \tf f := \PDiff{\Time} \int_0^\Time \tfkernel(\Time - s) f(s) \,\Diff s, \qquad 
    \tfkernel(\Time) := \frac{\Time^{\alpha-1}}{\Gamma(\alpha)},
\end{align}
where $\Gamma(s) = \int_0^\infty t^{s-1} e^{-t} \,\Diff t$ denotes the Euler gamma function.
The derivation replaces the probability density $\FPpdf$ on the right-hand side of equation \eqref{eq:FokkerPlanck} and in the definition of the extra-stress tensor \eqref{eq:KramersExpression} with its time-fractional derivative $\tf \FPpdf$.
Thus, the time-fractional Fokker--Planck (TFFP) equation reads as
\begin{align}
    \label{eq:timefractionalFokkerPlanck}
    \PDiff{\Time} \FPpdf 
    = 
    & - \left( \Velocity \cdot \Grad_{\Coordinate} \right) \tf \FPpdf 
    + \comdiff \Laplace_{\Coordinate} \tf \FPpdf \\ \nonumber
    & - \Grad_{\Configuration} \cdot \left((\Grad_{\Coordinate} \Velocity) \Configuration \tf \FPpdf \right) 
    + \frac{1}{2 \Deborah} \Grad_{\Configuration} \cdot \left(\SpringForce \tf \FPpdf + \Grad_{\Configuration} \tf \FPpdf \right)  
    \qquad \text{ in } (0,T) \times \Omega \times \ConfigurationSpace.
\end{align}
As discussed in \cite{heinsalu2007use}, this is the correct placement of the time-fractional derivative for a time-dependent force, while a Caputo-type time-fractional derivative on the left-hand side of the equation would be no physical representation of the underlying stochastic process.
In \cite{fritz2024analysis}, the well-posedness of the simplified corotational time-fractional NSFP system with a FENE spring force was shown, but only for $\alpha \in (1/2,1)$.

To avoid the history dependence before $t=0$ and the associated singular behavior of time-fractional derivatives, for the time-fractional NSFP system, we restrict ourselves to starting from rest, choosing the initial data as 
\begin{equation}
    \Velocity\big|_{\Time=0} \equiv 0 \quad  \text{ in } \CoordinateSpace, \qquad 
    \FPpdf(0, \Coordinate, \Configuration) = c \exp\left(-\frac{\Abs{\Configuration}^2}{2}\right)\quad  \text{ in } \CoordinateSpace \times \ConfigurationSpace, 
\end{equation}
whereby $c$ is a normalizing constant such that $\int_\ConfigurationSpace \FPpdf(0, \Coordinate, \Configuration) \,\Diff \Configuration \equiv 1$.
For the NS equations, we consider mixed boundary conditions. 
Let $\Gamma_D$ and $\Gamma_N$ denote a partition of the boundary $\partial\Omega$ into the Dirichlet and Neumann part, respectively, i.e., $\partial \CoordinateSpace = \Gamma_D \cup \Gamma_N$, $\Gamma_D \cap \Gamma_N = \emptyset$, then  
\begin{alignat}{3}
    \label{eq:NS_BC}
    \Velocity 
    & = \Velocity_D \quad
    && \text{ on } (0,T) \times \Gamma_D, \\ 
    \Grad_{\Coordinate} \Velocity \vec{n} 
    & = \vec{0} \quad
    && \text{ on } (0,T) \times \Gamma_{N},
\end{alignat}
where $\vec{n}$ denotes the outer normal vector on $\CoordinateSpace$. We assume that the Dirichlet boundary condition is compatible with the initial condition.
On the boundary of the spatial domain, we equip the TFFP equation with homogeneous Neumann boundary conditions
\begin{equation}
    \label{eq:FP_BC}
    \Grad_{\Coordinate} \FPpdf \cdot \vec{n} = 0, \quad \text{ on } (0,T) \times \partial\CoordinateSpace \times \ConfigurationSpace, 
\end{equation}
which corresponds to the assumption of having a uniform particle distribution over the spatial domain, see, e.g., Lemma 3.2 in \cite{barrett2010existence}.
For the Hookean spring force, we consider sufficiently strong decay conditions for $\FPpdf$ in the configuration space as $|\Configuration| \rightarrow \infty$, see, e.g., \cite{barrett2018existence}.

Direct numerical simulations of the fully coupled NSFP system are extremely challenging even for the integer-order case since the FP equation is defined on the Cartesian product of two $d$-dimensional spaces.
To overcome this problem, Chauviere and Lozinski \cite{chauviere2004simulation} suggested the application of a first-order Strang splitting \cite{strang1968construction} to solve the problems on the spatial domain and the configuration space separately, see also \cite{cao2015time} for operator splittings of time-fractional differential equations.
By applying a spectral method to the configuration space, one transforms the $2d$-dimensional system into a large coupled system of partial differential equations (PDEs) defined on the spatial domain.
This approach was applied to the FENE-type NSFP system using spherical harmonics and Jacobi-polynomials in \cite{Knezevic_Suli_2009} and to a Hookean-type model using the Hermite spectral method by \cite{mizerova2018conservative} and later to time-fractional Hookean-type NSFP by \cite{beddrich2024numerical}.
Due to the large number of coupled PDEs and the high temporal and spatial resolution required for turbulent flow, the drag-reducing effect of dilute polymeric fluids has not yet been rigorously studied.
Modern stochastic approaches to approximate the micro-macro system applying the Brownian configuration field method, \cite{GRIEBEL201441,ruttgers2018multiscale}, see also \cite{chauviere2002new,chauviere2003efficient}, variance reduction methods such as the multilevel Monte Carlo method \cite{ye2018numerical}, even for highly parallelized GPU implementations \cite{cromer2023macro} also suffer from the
high computational cost and are limited to a few prototypical test cases.

To avoid the challenges posed by high-dimensional PDE systems, macroscopic models describing the extra-stress tensor using a fully macroscopic constitutive relation are widely used for the numerical simulation of drag-reducing agents.
Considering a linear spring force, multiplying the FP equation by $(\SpringForce \Configuration \Transpose  - \Id)$ and integrating over the configuration space results in the diffusive Oldroyd-B model. For an overview of the mathematical intricacies of the Oldroyd-B model \cite{oldroyd1950formulation}, we refer to the recent review of Renardy and Thomases \cite{renardy2021mathematician}.
For the FENE model, it is impossible to analytically derive such a constitutive relation for the extra-stress tensor \cite{herrchen1997detailed}.
Still, macroscopic approximations of the FENE model can be obtained based on simplifications of the spring force. 
For instance, the Peterlin closure, replacing the term $\Abs{\Configuration}^2$ in the FENE spring force with $\int_{\ConfigurationSpace} \Abs{\Configuration}^2 \FPpdf \,\Diff\Configuration$, results in the popular FENE-Peterlin (FENE-P) model. 
Further macroscopic models relevant in the context of polymeric fluids include the Upper-Convected-Maxwell model \cite{Lahiri2023drag}, the linear Jeffreys model \cite{dellar2014lattice}, the Phan--Thien--Thanner model \cite{sousa2011effect}, and the Giesekus model \cite{yu2004numerical,DIMITROPOULOS1998433}. 
We refer the interested reader to \cite{alves2021numerical} for a general overview of numerical methods for viscoelastic fluids. 
To our knowledge, there is no study considering polymer molecules with memory effects in the context of drag-reducing agents.

This article is organized as follows:
In Section \ref{sec:preliminaries}, we introduce the relevant mathematical preliminaries, namely the Hermite spectral method and the kernel compression method based on rational approximation for the time-fractional derivative.
In Section \ref{sec:macroscopic_model}, we derive a macroscopic model for the extra-stress tensor of the time-fractional Hookean-type NSFP system, which is equivalent to solving the fully coupled micro-macroscopic model with the Hermite spectral method, building on \cite{beddrich2024numerical}.
Thereafter, in Section \ref{sec:numerical_scheme}, we state the semi-discrete system and derive the fully discrete formulation of the problem.
Section \ref{sec:numerical_experiments} contains numerical experiments showing the convergence order of the approach, the turbulence-reducing effect of the polymer molecules, the influence of the number of polymer molecules, and the Deborah and Reynolds number, as well as the influence of the order of the time-fractional derivative.
We conclude the article in Section \ref{sec:conclusion}.

\section{Preliminaries}
\label{sec:preliminaries}
We will use the Hermite spectral method to discretize the FP equation over the configuration space.
Therefore, we introduce the method and the corresponding fundamentals in \ref{subsec:Hermite_spectral_method}.
Similarly, we outline the foundations of the kernel compression method in Section \ref{subsec:kernel_compression_method}, which we will apply to the time-fractional derivative.
For more detailed introductions, we refer the interested reader to the textbooks of Shen \cite{shen2011spectral} and Diethelm \cite{diethelm2010analysis}, respectively.

\subsection{Hermite spectral method}
\label{subsec:Hermite_spectral_method}

The Hermite polynomials on $\R$ are defined by    
\begin{equation}
    \HermitePolynomial{m}(r) := \left(-1\right)^m e^{r^2} \partial_r^m \left(e^{-r^2}\right), \quad m \in \N_0.
\end{equation}
Equivalently, the Hermite polynomials can be introduced as a $w$-orthogonal family of polynomials for the weight function $w(r):= e^{-r^2}$.
Since the Hermite polynomials $H_0$, $H_1$, and $H_2$ will be of utmost importance in this work, we state them explicitly 
\begin{equation}
    \HermitePolynomial{0}(x) = 1, \qquad 
    \HermitePolynomial{1}(x) = 2x, \qquad 
    \HermitePolynomial{2}(x) = 4x^2-2.
\end{equation}
For $a > 0$, the scaled Hermite functions are defined as 
\begin{equation}
    \HermiteFunction{m}(r):= \left(\frac{\sqrt{\scaling}}{\sqrt[4]{\pi}}\right) \frac{w(\scaling r)}{\sqrt{2^m m!}} \HermitePolynomial{m}(\scaling r), 
\end{equation}
where the prefactor $\sqrt{\scaling} / \sqrt[4]{\pi}$ is non-standard and solely introduced to obtain orthonormality in Equation \eqref{eq:Hermite_functions_orthonormality}.
Considering a function decaying towards infinity, the scaling parameter $\scaling$ is commonly chosen to increase the accuracy of the approximation with Hermite functions within the effective interval outside of which the function is negligible.
To ensure an accurate calculation of the extra-stress tensor, we will set $\scaling = 1/\sqrt{2}$, see Section \ref{subsec:mm_scaling_parameter}.
The Hermite polynomials and Hermite functions up to order five are displayed in Figure \ref{fig:Hermite_polynomials_functions}.
\begin{figure}
     \centering
     \includegraphics[width=0.95\textwidth]{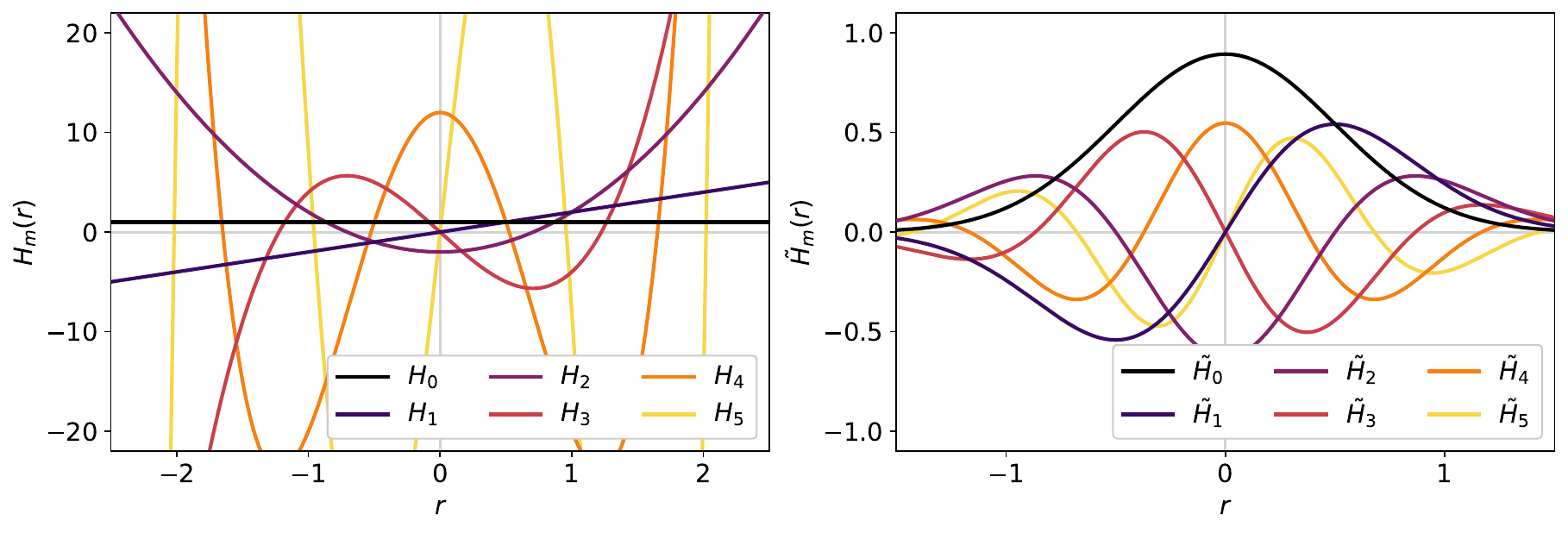}
     \caption{Hermite polynomials (left) and Hermite functions with scaling parameter $a = 1 / \sqrt{2}$ (right) up to order 5.}
     \label{fig:Hermite_polynomials_functions}
\end{figure}

The following properties of scaled Hermite functions are essential for the practicability of the Hermite spectral method.
The Hermite functions form an orthonormal basis of $L^2_{1/w}(\R)$ in the sense: 
\begin{equation}
    \label{eq:Hermite_functions_orthonormality}
    \int_\R \frac{\HermiteFunction{n}(r) \HermiteFunction{m}(r)}{ w(\scaling r)} \,\Diff r = \delta_{nm}, \quad 
    n,m \in \N_0.
\end{equation}
The only Hermite function with mass is $\HermiteFunction{0}$, i.e., for $n \in \N$, it holds  
\begin{equation}
    \label{eq:Hermite_functions_zero_mass}
    \int_\R \HermiteFunction{n}(r) \,\Diff r = 0, \quad 
    n \in \N.
\end{equation}
Finally, the derivative of a Hermite function and the product of a Hermite function with a linear function can be expressed in terms of Hermite functions.
For $m \in \N_0$, 
\begin{align}
    \label{eq:Hermite_functions_derivative}
    \HermiteFunction{m}'(r) &= -\scaling \sqrt{2(m+1)} \HermiteFunction{m+1}(r), \\ 
    \label{eq:Hermite_functions_times_r}
    \scaling r \HermiteFunction{m}(r) & = \sqrt{\frac{m+1}{2}} \HermiteFunction{m+1}(r) + \sqrt{\frac{m}{2}} \HermiteFunction{m-1}(r), 
\end{align}
where we use the convention $\HermiteFunction{-1} \equiv 0$.

The starting point of our numerical scheme is the approximation of $\FPpdf$ over the configuration space $\ConfigurationSpace$ by a tensor product of Hermite functions.
This choice is inherent, as $\ConfigurationSpace = \R^d$ is unbounded, and the approximation with Hermite functions automatically decays exponentially toward infinity. 
For $N \in \N_0$, the $L^2_w(\R)$-orthogonal projection $\Pi_N : L^2_w(\R) \rightarrow P_N$ onto the space of polynomials up to degree $N$ is defined by 
\begin{equation}
    \label{eq:Hermite_projection}
    \left(f - \Pi_N f, g \right)_w := \int_\R \big(f(y) - (\Pi_Nf)(y)\big) g(y) w(y) \Diff y = 0, \quad \forall g \in P_N. 
\end{equation}
The Hermite function approximation of any $f \in L^2_{1/w}(\R)$ is given as $\Tilde{\Pi}_N f := w \Pi_N (f/w) \in \text{span}\{ w \HermitePolynomial{k}\}_{k=0}^N$, and thus, we define the approximation of $\FPpdf$ with respect to Hermite functions with scaling parameter $a$ as
\begin{equation}
    \label{eq:HSM_psiN_definition}
    \FPpdf_N(\Configuration) 
    := \sum_{i_1, \ldots, i_d=0}^N \HermiteModes{i_1, \ldots, i_d} \HermiteFunction{i_1, \ldots, i_d}(\Configuration),  
\end{equation}
whereby we make use of the notation $\HermiteFunction{i_1, \ldots, i_d}(\Configuration) = \prod_{k=1}^d\HermiteFunction{i_k}(q_k)$, and the so-called Hermite spectral modes $\HermiteModes{i_1, \ldots, i_d}$ are defined as 
\begin{equation}
    \label{eq:HSM_phi_definition}
    \HermiteModes{i_1, \ldots, i_d}(\Time, \Coordinate):= \int_{\ConfigurationSpace} \FPpdf(\Time, \Coordinate, \Configuration) \HermiteFunction{i_1, \ldots, i_d}(\Configuration) \prod_{k=1}^d w(\scaling q_k)^{-1} \,\Diff\Configuration.
\end{equation}
We denote by $\sum_{k=1}^d i_k$ the degree of the Hermite spectral mode $\HermiteModes{i_1, \ldots, i_d}$ and the Hermite function $\HermiteFunction{i_1, \ldots, i_d}$, respectively.

\subsection{Kernel compression method}
\label{subsec:kernel_compression_method}

To discretize the time-fractional derivative, we use a kernel compression method, i.e., by approximating the time-fractional integral kernel with a weighted sum of exponentials, we transform the time-fractional differential equation into a system of ODEs.
The approach considered in this work is based on the work of \cite{khristenko2023solving}.
The time-fractional integral kernel is approximated as the sum of exponential terms 
\begin{equation}
    \label{eq:tf_approximate_kernel}
    \tfkernel(t) \approx \tfapproxkernel(t)
    := \sum_{k=1}^{m} \tfweights e^{- \tfpoles t}, 
\end{equation}
whereby $\delta(t)$ denotes the Dirac-delta distribution.
By Bernstein's theorem for completely monotone functions \cite{bernstein1929fonctions}, the approximation $\tfapproxkernel$ of the integral kernel is completely monotone if, for $k = 1, \ldots, m$, the weights $\tfweights$ and the poles $\tfpoles$ are non-negative. Replacing the fractional integral kernel $\tfkernel$ with its approximation $\tfapproxkernel$ in the definition of the time-fractional derivative, for $\Time > 0$, we obtain
\begin{equation}
    \label{eq:fractional_derivative_as_modes}
    \PDiff{\Time} (\tfapproxkernel * f) (\Time) = \PDiff{\Time} \sum_{k=1}^m f_k(\Time), 
\end{equation}
whereby we define the fractional modes $f_k$ as 
\begin{align}
    \label{eq:fractional_modes_definition}
    f_k(\Time) := \tfweights \int_0^\Time e^{- \tfpoles (\Time - s)} f(s) \,\Diff s, \qquad k=1,\ldots,m.
\end{align}
We observe, that for $k=1,\ldots,m$, the fractional modes $f_k$ are solutions of the initial value problems
\begin{equation}
    \label{eq:fractional_modes_equations}
    \PDiff{\Time} f_k = - \tfpoles f_k + \tfweights f, \qquad f_k(0) = 0, 
\end{equation}
where the initial condition of the mode equations arises directly from the definition in \eqref{eq:fractional_modes_definition}, as for a sufficiently smooth $f$ the integral value tends to zero as $t$ does.
Inserting \eqref{eq:fractional_modes_equations} in \eqref{eq:fractional_derivative_as_modes}, we rewrite the approximation of the time-fractional derivative as 
\begin{equation}
    \label{eq:fractional_modes_fractional_derivative}
    \PDiff{\Time} (\tfapproxkernel * f) 
    = \sum_{k=1}^m \left(- \tfpoles f_k + \tfweights f\right), 
\end{equation}
allowing us to approximate the time-fractional differential equation by a system of ODEs.

Obtaining such an approximation of the fractional integral kernel of the form \eqref{eq:tf_approximate_kernel} is a non-trivial problem. Khristenko et al. \cite{khristenko2023solving} suggest using the rational approximation of $s^{-\alpha}$, the Laplace transform of the integral kernel, employing the adaptive Anatoulas Andersen (AAA) algorithm \cite{nakatsukasa2018aaa}, whereby both numerator and denominator have the same degree. Subsequent partial fraction decomposition and application of the inverse Laplace transform results in an approximation of the form 
\begin{equation}
    \label{eq:tf_approximate_kernel_khristenko}
    \tfkernel(t) \approx \tfapproxkernel(t)
    := \sum_{k=1}^{m} \tfweights e^{- \tfpoles t} + \tfwinf \delta(t), 
\end{equation}
where $\delta(t)$ is the Dirac delta function. For $w_\infty > 0$, naively replacing the time-fractional kernel with \eqref{eq:tf_approximate_kernel} violates the property 
\begin{equation}
    \lim_{\Time \rightarrow 0^+}\int_0^\Time \tfkernel(\Time - s) f(s) \,\Diff s = 0,
\end{equation} 
which holds for $\alpha > 0$, $p > \max\{1,\, 1/\alpha\}$ and $f \in L^p(0,T)$, see \cite{diethelm2010analysis}.
To overcome this inconsistency, we follow the approach introduced by Duswald et al. \cite{duswald2024finite} for the fractional Laplacian. Applying the AAA algorithm to $s^{1-\alpha}$ and dividing by $s$, we obtain a rational approximation of $s^{-\alpha}$, whereby the degree of the numerator is by one lower than the degree of the denominator, resulting in an approximation
of the kernel as in \eqref{eq:tf_approximate_kernel_khristenko} with $\tfwinf =0$. Thus, we will use this approach as it guarantees automatically the approximation of the kernel as in 
\eqref{eq:tf_approximate_kernel}.

\section{Derivation of a macroscopic formulation}
\label{sec:macroscopic_model}
Discretizing the FP equation over the configuration space $\ConfigurationSpace$ with spectral methods results in large coupled PDE systems of the spectral modes over the spatial domain $\CoordinateSpace$, and thus, fully macroscopic models.
As the corresponding configuration space is unbounded, the Hermite spectral method is an inherent choice for the Hookean spring model.

Mizerova and She \cite{mizerova2018conservative} applied the Hermite spectral method to the Hookean-type NSFP equations, whereby a first-order Strang splitting was used to solve the FP equation separately over the spatial domain and the configuration space.
We extended this approach by exploiting that spectral modes are independent of higher-order spectral modes, 
and that only second-order Hermite spectral modes are necessary to calculate the polymer-induced extra stresses in the NS equations \cite{beddrich2024numerical}; see also \cite{hetland2023solving}.
Based on these findings, we derive a macroscopic model for the extra-stress tensor with only $7$ ($4$ in 2D) equations, equivalent to solving the Hookean-type NSFP system with the Hermite spectral method.
We derive the macroscopic model for the integer-order FP equation and restrict ourselves to the three-dimensional case for readability.

\subsection{Extra-stress tensor}
\label{subsec:mm_extra_stress}

We calculate the extra-stress tensor in the NS equations for the Hermite function approximation of $\FPpdf$.
Inserting $\FPpdf_N$ from Equation \eqref{eq:HSM_psiN_definition} into Equation \eqref{eq:KramersExpression}, we obtain 
\begin{align}
    \label{eq:ExtraStressN_definition}
    \ExtraStress_N  (\Time, \Coordinate) 
    & := \ExtraStress(\FPpdf_N) 
    = \gamma 
    \sum_{j,k,l = 0}^N \HermiteModes{j,k,l} (\Time, \Coordinate) \int_\mathcal{D} \left(\Configuration \Configuration\Transpose - \Id\right) \HermiteFunction{j,k,l} (\Configuration) \,\Diff\Configuration.
\end{align}
Since Hermite functions with odd degrees are odd functions and exploiting the $w$-orthogonality of the Hermite polynomials, only the integrals with Hermite functions of degree $0$ or $2$ are nonzero.
Thus, for $N \geq 2$, the extra-stress tensor can be expressed as 
\begin{align}
    \label{eq:ExtraStressN_0and2order}
    \ExtraStress_N
    & = 
    \gamma \left(\begin{pmatrix}
        \chi_2 \phi_{2,0,0} & \chi_1 \phi_{1,1,0} & \chi_1 \phi_{1,0,1} \\ 
        \chi_1 \phi_{1,1,0} & \chi_2 \phi_{0,2,0} & \chi_1 \phi_{1,1,0} \\ 
        \chi_1 \phi_{1,0,1} & \chi_1 \phi_{0,1,1} & \chi_2 \phi_{0,0,2}
    \end{pmatrix}
    - \chi_0 \phi_{0,0,0} \Id
    \right), 
\end{align}
where the coefficients are given by 
\begin{alignat}{3}
    \label{eq:chi_integrals_Hermite_functions}
    \chi_0 
    & = \int_{\ConfigurationSpace} (q_1^2-1) \HermiteFunction{0,0,0}(\Configuration) \,\Diff\Configuration 
    && = \frac{(1-2\scaling^2) \pi^{3/4}}{2 \scaling^{7/2}}, \\  
    \chi_1 
    & = \int_{\ConfigurationSpace} q_1 q_2 \HermiteFunction{1,1,0}(\Configuration) \,\Diff\Configuration
    && = \frac{\pi^{3/4}}{2 \scaling^{7/2}},\\ 
    \chi_2 
    & = \int_{\ConfigurationSpace} q_1^2 \HermiteFunction{2,0,0}(\Configuration) \,\Diff\Configuration 
    && = \frac{\pi^{3/4}}{\sqrt{2} \scaling^{7/2}}.
\end{alignat}
It is worth pointing out that for $N\geq2$, the extra-stress tensor depends only on spectral modes of degrees $0$ and $2$. Note that for $\scaling = 1 / \sqrt{2}$, $\chi_0 = 0$; thus, the zero-degree spectral mode does not contribute to the coupling tensor.

\subsection{Fokker--Planck equation}
\label{subsec:mm_fokker_planck}

Denoting the spatio-temporal differential operator and the configuration differential operator in the FP equation by
\begin{align}
    \mathcal{X}(\Velocity) \FPpdf
    & := \PDiff{\Time} \FPpdf + \left(\Velocity \cdot \nabla_{\Coordinate} \right) \FPpdf - \comdiff \Laplace_{\Coordinate} \FPpdf, \\ 
    \mathcal{Q}(\Grad_{\Coordinate}\Velocity)\FPpdf 
    & := - \Grad_{\Configuration} \cdot \big((\Grad_{\Coordinate}\Velocity) \Configuration \FPpdf \big)
    + \frac{1}{2 \Deborah} \Grad_{\Configuration} \cdot \big(\Configuration \FPpdf + \Grad_{\Configuration}\FPpdf \big),
\end{align}
the FP equation reads as 
\begin{equation}
    \label{eq:FokkerPlanckXQ}
    \mathcal{X}(\Velocity) \FPpdf = \mathcal{Q}(\Grad_{\Coordinate}\Velocity) \FPpdf.
\end{equation}
We insert $\FPpdf_N$ into \eqref{eq:FokkerPlanckXQ}, multiply with the test function 
\begin{align}
    \label{eq:test_function_FP}
    \TestFunction_{r,s,z} (\Configuration) = \frac{\HermiteFunction{r,s,z}(\Configuration)}{\weight(\scaling q_1) \weight(\scaling q_2) \weight(\scaling q_3)}, 
\end{align}
whereby $0 \leq r,s,z, \leq N$, and integrate over the configuration space.
Since the spatio-temporal operator does not depend on the configuration $\Configuration$, we obtain  
\begin{equation}
    \label{eq:space_time_op_tested}
    \int_{\ConfigurationSpace} \mathcal{X}(\Velocity) \FPpdf_N \TestFunction_{r,s,z} \,\Diff\Configuration 
    = \mathcal{X}(\Velocity) \HermiteModes{r,s,z}.
\end{equation}
Proceeding analogously for the configuration operator, following the steps outlined in \cite{mizerova2018conservative}, we find 
\begin{align}
    \label{eq:conf_op_tested}
    & \int_{\ConfigurationSpace} \mathcal{Q}(\Grad_{\Coordinate}\Velocity) \FPpdf_N \TestFunction_{z_1,z_2,z_3} \,\Diff\Configuration 
    \\ \nonumber
    & = \HermiteModes{z_1,z_2,z_3} \sum_{l=1}^d \left((\Grad_{\Coordinate}\Velocity)_{ll} - \frac{1}{2 \Deborah} \right) z_l 
    + \sum_{l=1}^d \HermiteModes{(z_l - 2)} \left(\frac{2 \scaling^2 - 1}{2 \Deborah} + (\Grad_{\Coordinate}\Velocity)_{ll} \right) \sqrt{(z_l-1) z_l}
    \\ \nonumber
    &\quad + \sum_{1 \leq l \neq m \leq d} \HermiteModes{(z_l + 1, z_m - 1)} (\Grad_{\Coordinate}\Velocity)_{ml} \sqrt{(z_l+1) z_m}
    + \sum_{1 \leq l \neq m \leq d} \HermiteModes{(z_l - 1, z_m - 1)} (\Grad_{\Coordinate}\Velocity)_{ml} \sqrt{z_l z_m}, 
\end{align}
where we introduced the notation $(z_l-2)$ to indicate that the index $z_l$ is replaced by $z_l - 2$, while all other indices remain the same.
Analogously, $(z_m +1, z_l-1)$ denotes the replacement of $z_m$ and $z_l$ with $z_m+1$ and $z_l-1$, respectively.
We set the corresponding Hermite spectral mode to zero if any index is negative or larger than $N$. 

We point out two key observations previously outlined in \cite{beddrich2024numerical}.
In \eqref{eq:conf_op_tested}, every Hermite spectral mode $\HermiteModes{j,k,l}$ depends on itself and, at most, 12 other Hermite spectral modes.
Further, every $\HermiteModes{j,k,l}$ only depends on Hermite spectral modes, whose degree is $j+k+l$ or $j+k+l-2$, and thus, not Hermite spectral modes with higher degree. 
Similar approaches are unavailable for polynomial spring forces of a degree larger than one, as this would result in a dependency on Hermite spectral modes with a higher degree.

\subsection{The choice of the scaling parameter}
\label{subsec:mm_scaling_parameter}

In previous works \cite{mizerova2018conservative,beddrich2024numerical}, applying the Hermite spectral method to Hookean-type NSFP equations, the scaling parameter was arbitrarily chosen as $\scaling = 1/2$.
However, since the accuracy of approximations with Hermite functions is highly dependent on the choice of the scaling parameter \cite{mohammadi2015hermite}, we further investigate the role of the scaling parameter.
By considering homogeneous flow, i.e., $\Grad_{\Coordinate} \Velocity$ is symmetric, we construct an analytical solution of the FP equation and find that by exactly choosing $\scaling = 1 / \sqrt{2}$, the Hermite approximate of the extra-stress tensor and the analytical extra-stress tensor coincide.

Assuming that $\FPpdf$ is constant over the spatial domain $\CoordinateSpace$, the FP equation \eqref{eq:FokkerPlanckXQ} simplifies to 
\begin{align}
    \label{eq:FP_simplified_constant_in_space}
    \PDiff{\Time} \FPpdf = \mathcal{Q}(\Grad_{\Coordinate}\Velocity) \FPpdf.
\end{align}
Since we consider a homogeneous flow scenario, the gradient of the velocity field is symmetric and trace-free.
With this, we can determine a class of functions in the kernel of the configuration operator and, thus, a class of steady-state solutions of the FP equation.

\begin{lemma}
    Let $\tensor{D} \in \R^{3 \times 3}$ be a trace-free, symmetric matrix, $\Deborah \in \R$, such that 
    \begin{align}
        \tensor{C} := \frac{1}{2} \Id - \Deborah \, \tensor{D} 
    \end{align}
    is symmetric and positive definite.
    Then 
    \begin{align}
        \mathcal{Q}(\tensor{D}) \exp\left(- \Configuration\Transpose \tensor{C} \Configuration \right)
        = 0.
    \end{align}
\end{lemma}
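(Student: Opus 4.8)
The plan is a direct computation exploiting the Gaussian structure of the candidate kernel; no compactness or PDE machinery is needed. Write $\FPpdf := \exp\!\big(-\Configuration\Transpose\tensor{C}\Configuration\big)$. The first step is to differentiate the quadratic form: since $\tensor{C}$ is symmetric, $\Grad_{\Configuration}\big(\Configuration\Transpose\tensor{C}\Configuration\big) = 2\tensor{C}\Configuration$, and hence by the chain rule $\Grad_{\Configuration}\FPpdf = -2\,\tensor{C}\Configuration\,\FPpdf$. This is the only place where the symmetry hypothesis on $\tensor{C}$ (equivalently, on $\tensor{D}$) is actually used.

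The second step is to simplify the spring-plus-diffusion contribution appearing in $\mathcal{Q}$. Using the identity just derived,
\begin{equation}
    \Configuration\FPpdf + \Grad_{\Configuration}\FPpdf = \big(\Id - 2\tensor{C}\big)\Configuration\,\FPpdf,
\end{equation}
and substituting the definition $\tensor{C} = \tfrac12\Id - \Deborah\,\tensor{D}$, so that $\Id - 2\tensor{C} = 2\Deborah\,\tensor{D}$, one obtains
\begin{equation}
    \frac{1}{2\Deborah}\,\Grad_{\Configuration}\cdot\big(\Configuration\FPpdf + \Grad_{\Configuration}\FPpdf\big)
    = \Grad_{\Configuration}\cdot\big(\tensor{D}\Configuration\,\FPpdf\big).
\end{equation}
The final step is to insert this into the definition of $\mathcal{Q}(\tensor{D})\FPpdf$: the drift term is $-\Grad_{\Configuration}\cdot\big(\tensor{D}\Configuration\FPpdf\big)$ (the term $(\Grad_{\Coordinate}\Velocity)\Configuration$ with $\Grad_{\Coordinate}\Velocity$ taken to be $\tensor{D}$), which is exactly the negative of the expression above, so the two cancel identically and $\mathcal{Q}(\tensor{D})\FPpdf = 0$.

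It is worth noting that neither the trace-freeness of $\tensor{D}$ nor the positive-definiteness of $\tensor{C}$ is needed for the cancellation itself: positive-definiteness only serves to guarantee that $\FPpdf$ is integrable over $\ConfigurationSpace$ and hence (after normalization) a genuine steady-state probability density, while trace-freeness is the structural property of the velocity gradient of an incompressible homogeneous flow, which is why this is the relevant family. Consequently there is essentially no obstacle; the only care required is the vector-calculus bookkeeping — using symmetry correctly when differentiating $\Configuration\Transpose\tensor{C}\Configuration$, keeping track of the factor $2\Deborah$, and taking all divergences in the configuration variable $\Configuration$ alone.
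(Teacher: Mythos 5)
Your proof is correct, and while both arguments are direct computations, yours takes a genuinely cleaner route than the paper's. The paper expands both divergences in $\mathcal{Q}(\tensor{D})f$ fully, producing a quadratic form $\Configuration\Transpose\bigl(2\tensor{D}\tensor{C}-\tfrac{1}{\Deborah}\tensor{C}+\tfrac{2}{\Deborah}\tensor{C}^2\bigr)\Configuration$ plus constant (trace) terms, and then verifies that each group vanishes after substituting $\tensor{C}=\tfrac12\Id-\Deborah\tensor{D}$; you instead factor the flux \emph{before} differentiating, observing that $\Configuration f+\Grad_{\Configuration}f=(\Id-2\tensor{C})\Configuration f=2\Deborah\,\tensor{D}\Configuration f$, so that the spring-plus-diffusion term becomes exactly $\Grad_{\Configuration}\cdot(\tensor{D}\Configuration f)$ and cancels the drift term identically, with no expansion at all. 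What your route buys is twofold: it makes transparent that the only hypothesis actually used is the symmetry of $\tensor{C}$ (equivalently of $\tensor{D}$), with trace-freeness and positive definiteness playing no role in the pointwise identity (they enter only through incompressibility and integrability of the steady state), which is consistent with the paper's computation once one notes that its trace terms also cancel for arbitrary $\Trace(\tensor{D})$; and it avoids the coefficient bookkeeping in which the paper's intermediate display appears to contain a slip (the term $\tfrac{4}{2\Deborah}\Trace(\tensor{C})$ should read $\tfrac{2}{2\Deborah}\Trace(\tensor{C})$, since $\Grad_{\Configuration}\cdot\Grad_{\Configuration}f=-2\Trace(\tensor{C})f+4\,\Configuration\Transpose\tensor{C}^2\Configuration\,f$). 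The only implicit assumption in both proofs is $\Deborah\neq 0$, which is already required for $\mathcal{Q}$ to be defined.
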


\begin{proof}
    We define $f(\Configuration) := \exp{\left(- \Configuration\Transpose \tensor{C} \Configuration \right)}$, and the result follows from the following direct calculation:
    \begin{align*}
        \mathcal{Q}(\tensor{D}) f(\Configuration)  
        &= - \Grad_{\Configuration} \cdot \big(\tensor{D} \Configuration f(\Configuration) \big)
        + \frac{1}{2\Deborah} \Grad_{\Configuration} \cdot \big(\Configuration f(\Configuration) + \Grad_{\Configuration}f(\Configuration) \big) \\
        &= f(\Configuration) \left(
        \Configuration\Transpose \left( 
            2 \tensor{D} \tensor{C}  
            - \frac{2}{2\Deborah} \tensor{C} 
            + \frac{4}{2\Deborah} \tensor{C}^2 
        \right) \Configuration
        - \Trace\left(\tensor{D}\right) 
        + \frac{3}{2\Deborah} 
        - \frac{4}{2\Deborah} \Trace \left(\tensor{C}\right)
        \right) \\ 
        &= 0.
        \qedhere
    \end{align*}
\end{proof}
Having established a class of analytical steady-state solutions of \eqref{eq:FP_simplified_constant_in_space}, we calculate the analytical extra-stress tensor $\ExtraStress$ and the approximated extra-stress tensor $\ExtraStress_2$.

\begin{lemma}
    \label{lem:tau_equals_tau2}
    Let $\tensor{C} \in \R^{3 \times 3}$ be symmetric and positive definite, $f(\Configuration) = \exp{\left(- \Configuration\Transpose \tensor{C} \Configuration \right)}$.
    Then the best $L^2_{1/w}(\R)$ approximation $f_N$ of $f$ using Hermite functions with scaling parameter $\scaling = 1 / \sqrt{2}$ up to degree $N \in \N_0$ is defined as 
    \begin{equation}
        \label{eq:f2}
        f_N(\Configuration) 
        = \sum_{j, k, l = 0}^N \HermiteFunction{j,k,l}(\Configuration) \int_{\ConfigurationSpace} f(\Configuration) \HermiteFunction{j,k,l}(\Configuration) \prod_{i=1}^3 w(\scaling q_i)^{-1} \,\Diff\Configuration
    \end{equation}
  and satisfies for $N \geq 2$
    \begin{equation}
        \label{eq:tau_equals_tauN}
        \int_\ConfigurationSpace \left(\Configuration \Configuration \Transpose - \Id \right) f(\Configuration) \,\Diff \Configuration 
        = \int_\ConfigurationSpace \left(\Configuration \Configuration \Transpose - \Id \right) f_N (\Configuration) \,\Diff \Configuration.
    \end{equation}
\end{lemma}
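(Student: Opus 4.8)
The statement has two halves. The first, that the $f_N$ displayed in \eqref{eq:f2} is the $L^2_{1/w}(\R^3)$-best approximation of $f$ in the span of tensor-product Hermite functions of degree at most $N$, is essentially a restatement of orthonormality. By the one-dimensional identity \eqref{eq:Hermite_functions_orthonormality} together with Fubini — both the weight $\prod_{i=1}^3 w(\scaling q_i)^{-1}$ and the functions $\HermiteFunction{j,k,l}$ factor over the coordinates — the family $\{\HermiteFunction{j,k,l}\}_{0\le j,k,l\le N}$ is orthonormal in $L^2_{1/w}(\R^3)$; denote its span by $V_N$. Provided $f\in L^2_{1/w}(\R^3)$, the best approximation in this Hilbert space is the orthogonal projection onto $V_N$, whose coefficients in the orthonormal basis are exactly the integrals in \eqref{eq:f2}. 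This part I would dispatch in a couple of lines; the only real point of care is the membership $f\in L^2_{1/w}(\R^3)$, which with $\scaling=1/\sqrt2$ means $w(\scaling q)^{-1}=e^{q^2/2}$ and hence requires $2\tensor{C}-\tfrac12\Id$ to be positive definite, i.e. the smallest eigenvalue of $\tensor{C}$ to exceed $\tfrac14$ — a mild strengthening of plain positive-definiteness, which for the steady states of \eqref{eq:FP_simplified_constant_in_space}, where $\tensor{C}=\tfrac12\Id-\Deborah\,\tensor{D}$, holds whenever $\Deborah\,\tensor{D}$ is small enough. I would either add this hypothesis or note it explicitly.

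The moment identity \eqref{eq:tau_equals_tauN} is the substance. The key structural observation is that $V_N$ coincides with the set of functions of the form $\bigl(\prod_{i=1}^3 w(\scaling q_i)\bigr)\,p(\Configuration)$ with $p$ a polynomial of degree at most $N$ in each of $q_1,q_2,q_3$ — because $\{\HermitePolynomial{0},\dots,\HermitePolynomial{N}\}$ is a basis of $P_N$ and the construction tensorizes. Consequently the orthogonality $f-f_N\perp V_N$ in $L^2_{1/w}$, once unwound, says $\int_{\R^3}(f-f_N)(\Configuration)\,p(\Configuration)\,\Diff\Configuration=0$ for every polynomial $p$ of degree $\le N$ in each variable. (Equivalently, without invoking the projection at all: dividing $\HermiteFunction{j,k,l}$ by its weight returns a rescaled tensor-product Hermite polynomial, so testing $f-f_N$ against it reproduces the difference of the coefficient integrals in \eqref{eq:f2}, which vanishes for $j,k,l\le N$ straight from \eqref{eq:Hermite_functions_orthonormality}; all integrals converge absolutely since $f$ and $f_N$ decay like Gaussians.) Now every entry of the matrix $\Configuration\Configuration\Transpose-\Id$ occurring in Kramers' expression \eqref{eq:KramersExpression} is one of $q_iq_j$ ($i\neq j$) or $q_i^2-1$ — a polynomial of total degree at most $2$, hence of degree $\le2$ in each variable, and therefore lies in the above space as soon as $N\ge2$. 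Applying the vanishing-moment relation entrywise with $p=(\Configuration\Configuration\Transpose-\Id)_{ij}$ yields $\int_\ConfigurationSpace(\Configuration\Configuration\Transpose-\Id)(f-f_N)\,\Diff\Configuration=0$, which is exactly \eqref{eq:tau_equals_tauN}. The threshold $N\ge2$ is sharp: the monomials $q_i^2$ first enter the polynomial space at degree $2$, so for $N\in\{0,1\}$ the identity fails for generic $\tensor{C}$.

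The main obstacle is thus not algebra but fixing the setting: well-posedness of "best $L^2_{1/w}$ approximation" (hence the hypothesis $f\in L^2_{1/w}(\R^3)$) and the identification $V_N=\{\text{weight}\times\text{tensor-product polynomials of degree}\le N\}$, after which everything reduces to the one-line fact that $\Configuration\Configuration\Transpose-\Id$ has polynomial entries of degree $\le2$. A fully self-contained alternative, and probably the one closest to the paper's style given the explicit constants in \eqref{eq:chi_integrals_Hermite_functions}, is to evaluate both sides of \eqref{eq:tau_equals_tauN} by hand: the left-hand side is the classical Gaussian second moment, $\int_{\R^3}(\Configuration\Configuration\Transpose-\Id)\,e^{-\Configuration\Transpose\tensor{C}\Configuration}\,\Diff\Configuration=\frac{\pi^{3/2}}{\sqrt{\det\tensor{C}}}\bigl(\tfrac12\tensor{C}^{-1}-\Id\bigr)$, while on the right-hand side, by parity and $w$-orthogonality of the Hermite polynomials only the degree-$0$ and degree-$2$ modes of $f$ survive, giving the same quantity expressed through those modes and the constants $\chi_0,\chi_1,\chi_2$, which one computes from the very same Gaussian moments; the two expressions then agree. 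This route also makes transparent that \eqref{eq:tau_equals_tauN} does not genuinely hinge on the value $\scaling=1/\sqrt2$ — that value is fixed throughout for independent reasons (it makes the equilibrium density $e^{-\Abs{\Configuration}^2/2}$ equal to the single mode $\HermiteFunction{0,0,0}$ and makes $\chi_0$ vanish in \eqref{eq:ExtraStressN_0and2order}) — but it is the most convenient value at which to state the lemma.
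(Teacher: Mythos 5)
Your proposal is correct, and its core argument takes a genuinely different route from the paper's. The paper proves \eqref{eq:tau_equals_tauN} by direct computation in \ref{apdx:proof_lemma}: after reducing to the degree-$0$ and degree-$2$ modes, it evaluates $\ExtraStress_{11}$, $\ExtraStress_{12}$ and the corresponding Hermite quantities $\chi_2\HermiteModes{2,0,0}$, $\chi_1\HermiteModes{1,1,0}$ entry by entry via the Gaussian integral identities \eqref{eq:apdx_integral_constant}--\eqref{eq:apdx_integral_quadratic} (successive completion of squares, with Sylvester's criterion supplying the positivity needed at each step) and observes that the resulting expressions coincide for $\scaling=1/\sqrt{2}$ --- i.e.\ exactly the ``self-contained alternative'' you sketch at the end, except entrywise rather than via the closed matrix formula. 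Your main argument instead identifies $\operatorname{span}\{\HermiteFunction{j,k,l}\}_{j,k,l\le N}$, after division by the Gaussian weight, with the tensor-product polynomials of per-variable degree $\le N$, so the defining orthogonality of $f_N$ gives the unweighted vanishing moments $\int_{\ConfigurationSpace}(f-f_N)\,p\,\Diff\Configuration=0$ for every such $p$; since each entry of $\Configuration\Configuration\Transpose-\Id$ is a polynomial of degree $\le 2$, \eqref{eq:tau_equals_tauN} follows for $N\ge2$. This is shorter, dimension-independent, explains why only $N\ge 2$ matters, and correctly shows that the identity itself holds for any $\scaling>0$ once the $\chi_0\HermiteModes{0,0,0}$ contribution is retained (the $\scaling$-dependence cancels between the degree-$0$ and degree-$2$ terms); what the paper's computation buys in exchange are the explicit closed forms behind $\chi_0,\chi_1,\chi_2$ and the observation, exploited in Section \ref{subsec:mm_extra_stress} and in the macroscopic system, that $\scaling=1/\sqrt2$ is precisely what removes the zeroth mode from the stress. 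Your caveat on the first half is also well taken: positive definiteness of $\tensor{C}$ alone does not guarantee $f\in L^2_{1/w}(\R^3)$ for $\scaling=1/\sqrt2$ (one needs $2\tensor{C}-\tfrac12\Id$ positive definite), so the phrase ``best $L^2_{1/w}$ approximation'' implicitly strengthens the hypothesis; the paper glosses over this, but, as you note, the coefficient integrals in \eqref{eq:f2} and the moment identity survive regardless because all integrands decay like Gaussians.
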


\begin{proof} 
    See \ref{apdx:proof_lemma}.
\end{proof}

\subsection{Macroscopic system}

The results of Sections \ref{subsec:mm_extra_stress}, \ref{subsec:mm_fokker_planck}, and \ref{subsec:mm_scaling_parameter} allow us to formulate a fully macroscopic model equivalent to solving the Hookean-type NSFP system with the Hermite spectral method. 
We choose $\scaling = 1 / \sqrt{2}$ and denote the vector of the Hermite spectral modes 
\begin{equation}
    \Vec{\Phi} = (\HermiteModes{0,0,0}, \HermiteModes{0,1,1}, \HermiteModes{1,0,1}, \HermiteModes{1,1,0}, \HermiteModes{0,0,2}, \HermiteModes{0,2,0}, \HermiteModes{2,0,0}){\Transpose}.
\end{equation} 
The fully macroscopic time-fractional NSFP model reads as  
\begin{alignat}{3}
    \label{eq:macroscopic_fokker_planck}
    \PDiff{\Time} \Vec{\Phi}
    + \big(\left(\Velocity \cdot \Grad_{\Coordinate} \right)
    - \comdiff \Laplace_{\Coordinate}
    - \tensor{A}(\Grad_{\Coordinate} \Velocity) \big) \tf \Vec{\Phi} 
    & = 0 
    \quad && \text{ in } (0,T) \times \Omega, \\ 
    \PDiff{\Time} \Velocity
    + \left( \Velocity \cdot \Grad_{\Coordinate} \right) \Velocity 
    + \Grad_{\Coordinate} \Pressure 
    - \frac{\beta}{\Reynolds} \Laplace_{\Coordinate} \Velocity 
    - \Grad_{\Coordinate} \cdot \ExtraStress \left(\tf \Vec{\Phi} \right)
    & = 0
    \quad && \text{ in } (0,T) \times \Omega, \\ 
    \Grad_{\Coordinate} \cdot \Velocity 
    & = 0 
    \quad && \text{ in } (0,T) \times \Omega, 
\end{alignat}
where the spatial operators are applied entry-wise to $\tf \Vec{\Phi}$. From now on, we refer to \eqref{eq:macroscopic_fokker_planck} also as the macroscopic FP (MFP) equation. 
The coupling between the entries of $\tf \Vec{\Phi}$ is given by
\begin{align}
    \tensor{A}(\Grad_{\Coordinate}\Velocity) = 
    \begin{pmatrix}
        0 
        & \vec{0}\Transpose \\ 
        \Vec{K}(\Grad_{\Coordinate}\Velocity) 
        & \tensor{A}_2 (\Grad_{\Coordinate}\Velocity) - \frac{1}{2 \Deborah} \Id
    \end{pmatrix}, 
\end{align}
where $\Vec{K}(\Grad_{\Coordinate}\Velocity) \in \R^6$, the contribution of the zeroth-order spectral mode, is given by 
\begin{align}
    \label{eq:mm_contribution_order0}
    \Vec{K}(\ConfAdvectionCoeff)
    & = \begin{pmatrix}
        \left(\kappa_{23} + \kappa_{32}\right), &
        \left(\kappa_{13} + \kappa_{31}\right), &
        \left(\kappa_{12} + \kappa_{21}\right), &
        \kappa_{33}, &
        \kappa_{22}, &
        \kappa_{11}
    \end{pmatrix}\Transpose,   
\end{align}
and $\tensor{A}_2(\Grad_{\Coordinate}\Velocity) \in \R^{6 \times 6}$, the coupling between the second-order spectral modes, reads as 
\begin{align}
    \label{eq:mm_coupling_order2}
    \tensor{A}_2(\ConfAdvectionCoeff) 
    & = \begin{pmatrix}
        \left(\kappa_{22} + \kappa_{33}\right) & \kappa_{21} & \kappa_{31} & \sqrt{2} \kappa_{23} & \sqrt{2} \kappa_{32} & 0 \\ 
        \kappa_{12} & \left(\kappa_{11} + \kappa_{33}\right) & \kappa_{32} & \sqrt{2} \kappa_{13} & 0 & \sqrt{2} \kappa_{31} \\ 
        \kappa_{13} & \kappa_{23} & \left(\kappa_{11} + \kappa_{22}\right) & 0 & \sqrt{2} \kappa_{12} & \sqrt{2} \kappa_{21} \\ 
        \sqrt{2} \kappa_{32} & \sqrt{2} \kappa_{31} & 0 & 2\kappa_{33} & 0 & 0 \\ 
        \sqrt{2} \kappa_{23} & 0 & \sqrt{2} \kappa_{21} & 0 & 2\kappa_{22} & 0 \\ 
        0 & \sqrt{2} \kappa_{13} & \sqrt{2} \kappa_{12} & 0 & 0 & 2\kappa_{11} 
    \end{pmatrix}.
\end{align}
As derived in Section \ref{subsec:mm_extra_stress}, the extra-stress tensor in the NS equations is determined by
\begin{align}
    \label{eq:mm_ExtraStress}
    \ExtraStress(\tf \tensor{\Phi})
    = (2\pi)^{3/4} \gamma \tf 
    \begin{pmatrix}
        \sqrt{2} \HermiteModes{2,0,0}   & \HermiteModes{1,1,0}          & \HermiteModes{1,0,1} \\ 
        \HermiteModes{1,1,0}            & \sqrt{2} \HermiteModes{0,2,0} & \HermiteModes{0,1,1} \\           
        \HermiteModes{1,0,1}            & \HermiteModes{0,1,1}          & \sqrt{2} \HermiteModes{0,0,2}
    \end{pmatrix}.
\end{align}
The system is closed by the boundary conditions 
\begin{alignat}{3}
    \label{eq:MNSFP_BC}
    \Grad_{\Coordinate} \Vec{\Phi} \cdot  \Vec{n} 
    & = \vec{0} \quad 
    && \text{ on } (0,T) \times \partial \CoordinateSpace, \\ 
    \Grad_{\Coordinate} \Velocity \cdot \Vec{n}
    & = \vec{0} \quad 
    && \text{ on } (0,T) \times \Gamma_{N}, \\   
    \Velocity 
    & = \Velocity_D \quad 
    && \text{ on } (0,T) \times \Gamma_D.
\end{alignat}
For the macroscopic time-fractional NSFP system, starting from rest corresponds to the initial conditions $\Velocity(0, \Coordinate) = \vec{0}$ and $\Vec{\Phi}(0, \Coordinate) = (1,0,0,0,0,0,0)\Transpose$ for $\Coordinate \in \Omega$.
Note that only $\HermiteModes{0,0,0}$ contains the ``mass'', cf. \eqref{eq:Hermite_functions_zero_mass}, and due to the chosen initial and boundary conditions, this implies $\HermiteModes{0,0,0} \equiv 1$.

\section{Numerical method}
\label{sec:numerical_scheme}
Applying numerical methods directly to the time-fractional Hookean-type NSPE system is infeasible since it is defined on the Cartesian product of two $d$-dimensional spaces, and it is nonlocal in time due to the time-fractional derivative.
Accepting the concession of not determining the solution of the TFFP equation but only the extra-stress tensor, in the previous section, we applied the Hermite spectral method to the TFFP equation and derived a purely macroscopic model for the extra-stress tensor, hence reducing the problems' complexity to solving a system of seven coupled $d$-dimensional time-fractional PDEs.
For the non-locality, we utilize the kernel compression method introduced in Section \ref{subsec:kernel_compression_method}, approximating the time-fractional derivative of the solution with a sum of $m$ fractional modes $\Vec{\Phi}_k$, $k = 1, \ldots, m$ which solve a system of ODEs.
The resulting macroscopic PDE system reads as follows, whereby from now on, we use $\Grad$ and $\Laplace$ to denote the spatial gradient and Laplace operator, respectively.
\begin{alignat}{3}
    \label{eq:approximate_macroscopic_navier_stokes_fokker_planck_1}
    \PDiff{t} \Vec{\Phi} 
    +  \sum_{k=1}^m\big(
    \left(\Velocity \cdot \nabla \right) \PDiff{t} \Vec{\Phi}_k 
    - \comdiff \Laplace \PDiff{t} \Vec{\Phi}_k 
    - \tensor{A}(\Grad \Velocity) \PDiff{t} \Vec{\Phi}_k 
    \big)
    & = 0
    \quad \text{ in } (0,T) \times \Omega, \\ 
    \label{eq:approximate_macroscopic_navier_stokes_fokker_planck_2}
    \text{for } k = 1, \ldots, m, \quad 
    \PDiff{t} \Vec{\Phi}_k 
    + \tfpoles \Vec{\Phi}_k - \tfweights \Vec{\Phi} 
    & = 0
    \quad \text{ in } (0,T) \times \Omega, \\ 
    \label{eq:approximate_macroscopic_navier_stokes_fokker_planck_3}
    \PDiff{\Time} \Velocity
    + \left( \Velocity \cdot \Grad \right) \Velocity 
    + \Grad \Pressure 
    - \frac{\beta}{\Reynolds} \Laplace \Velocity 
    - \sum_{k=1}^m \Grad \cdot \ExtraStress \left( \PDiff{t} \Vec{\Phi}_k \right)
    & = 0
    \quad \text{ in } (0,T) \times \Omega, \\ 
    \label{eq:approximate_macroscopic_navier_stokes_fokker_planck_4}
    \Grad \cdot \Velocity 
    & = 0,  
    \quad \text{ in } (0,T) \times \Omega, 
\end{alignat}
subject to initial conditions 
\begin{align}
    \label{eq:amnsfp_initial_conditions}
    \Velocity(0,\Coordinate) = \Velocity^0, \quad 
    \Vec{\Phi}(0,\Coordinate) = \Vec{\Phi}^0(\Coordinate), \quad 
    \Vec{\Phi}_k(0,\Coordinate) = \Vec{0}.
\end{align}
Starting from rest, as its initial state, we choose $\Vec{\Phi}^0 \equiv \left(1,0,0,0,0,0,0\right)\Transpose$, and  $\Velocity^0 \equiv \Vec{0}$.
Further, we consider mixed boundary conditions for the NS equations and pure Neumann boundary conditions for the MFP equation 
\begin{alignat}{3}
    \label{eq:amnsfp_boundary_conditions}
    \Velocity & = \Velocity_D \quad && \text{ on } (0,T) \times \Gamma_D, \\ 
    \Grad \Velocity \cdot \Vec{n} & = \vec{0} \quad && \text{ on } (0,T) \times \Gamma_N, \\ 
    \Grad \Vec{\Phi} \cdot \Vec{n} & = \vec{0} \quad && \text{ on } (0,T) \times \partial \Omega.
\end{alignat}

In this section, we describe the temporal and spatial discretization of the PDE system \eqref{eq:approximate_macroscopic_navier_stokes_fokker_planck_1}--\eqref{eq:amnsfp_boundary_conditions}.
Applying a projection method to the NS equations and a standard time integration to the MFP equation, we derive the semi-discrete system, and after expanding the finite element formulation with respect to the finite element bases the fully discrete system.

\subsection{Projection method and time integration}

For the NS equations, we utilize a projection method, as presented in \cite{franco2020high,tomboulides1997numerical}, which is based on the initial work of Chorin \cite{chorin1967numerical,chorin1968numerical}.
Extrapolating the nonlinear velocity term and the extra-stress tensor and considering the divergence of the momentum equation, we first solve a Poisson-type equation for the pressure, and reintroducing the already determined pressure into the momentum equation, we solve a Helmholtz-type equation for the velocity \cite{franco2020high}.
This implicit-explicit time integration method can achieve up to third-order global convergence in time for the velocity \cite{guermond2006overview}.

Subsequently, we solve the MFP equation using the previously determined velocity and update the fractional modes by a linear push forward.
Applying a time integration of order two or higher to the approximate MFP equation, the convergence order in time is at most $1 + \alpha$ for the solution; see, e.g., \cite{khristenko2023solving}.
Consequently, only first-order convergence is achieved for the extra-stress tensor, as it depends on the time-fractional derivative of the MFP solution.

We define the extrapolation operator $\mathcal{E}^{n+1}$ and introduce the backward differentiation formula (BDF), both of order $g \in \N$: 
\begin{equation}
    \label{eq:BDF_and_extrapolation}
    f^{n+1} \approx \mathcal{E}^{n+1}(f) := \sum_{j=1}^g a_j f^{n+1-j}, \qquad 
    \partial_t f \approx \sum_{j=0}^g \frac{b_j}{\Delta t} f^{n+1-j}.
\end{equation}
For $g = 1$, we obtain $a_1 = b_0 = 1$ and $b_1 = -1$, and for $g=2$ the coefficients are given by 
\begin{equation}
    a_0 = 2, \quad 
    a_1 = -1, \quad 
    b_0 = \frac{3}{2}, \quad 
    b_1 = -2, \quad 
    b_2 = \frac{1}{2}.
\end{equation}
We introduce the velocity-correction formulation for calculating the pressure, overcoming the limitation to first-order convergence in time for the velocity; see \cite{guermond2006overview,karniadakis1991high}.
The linear term $\Laplace \Velocity$ is rewritten as 
\begin{equation}
    \label{eq:velocity_correction}
    \nabla (\nabla \cdot \Velocity) - \nabla \times \nabla \times \Velocity, 
\end{equation}
and by setting the first term to zero, we weakly enforce the incompressibility constraint.
Introducing \eqref{eq:BDF_and_extrapolation} and \eqref{eq:velocity_correction} into the momentum equation, we obtain 
\begin{align}
    \label{eq:time_discretization_NS}
    \frac{b_0}{\Delta t} \Velocity^{n+1} 
    & = - \nabla \Pressure^{n+1} 
    \underbrace{ -
        \sum_{j=1}^g \frac{b_j}{\Delta t} \Velocity^{n+1-j}
        - \mathcal{E}^{n+1}\left( 
        \frac{\beta}{\Reynolds} \left( \nabla \times \nabla \times \Velocity \right) 
        + (\Velocity\cdot \nabla) \Velocity 
        - \nabla \cdot \ExtraStress \right)
    }_{=: - \vec{\Xi}^{n+1}}.
 \end{align}
To determine the pressure, we take the divergence of equation \eqref{eq:time_discretization_NS} and obtain a Poisson-type equation 
\begin{align}
    \label{eq:NS_Poisson}
    \Delta \Pressure^{n+1} = 
    - \frac{b_0}{\Delta t} \nabla \cdot \Velocity^{n+1} 
    - \nabla \cdot \vec{\Xi}^{n+1}.
\end{align}
Because of the continuity equation, $\Grad \cdot \Velocity^{n+1} = 0$. The velocities numerically calculated at previous time steps are not divergence-free and, thus, appear in the pressure equation.
The system is closed by the boundary condition 
\begin{alignat}{3}
    \Grad \Pressure ^{n+1} \cdot \Vec{n}
    & = \vec{\Xi}^{n+1} \cdot \Vec{n} - \frac{b_0}{\Delta t} \Velocity_D^{n+1} \cdot \Vec{n}, 
    \quad && \text{on } \Gamma_D, \\ 
    \Grad \Pressure ^{n+1} \cdot \Vec{n}
    & = \vec{\Xi}^{n+1} \cdot \Vec{n}, 
    \quad && \text{on } \Gamma_N.
\end{alignat}
If only Neumann boundary conditions are considered, we utilize a mean-zero condition on the pressure to fully specify the system.
Reintroducing the pressure $\Pressure^{n+1}$ into the momentum equation, we determine $\Velocity^{n+1}$ by solving the remaining Helmholtz-type equation 
\begin{align}
    \label{eq:NS_Helmholtz}
    \frac{b_0}{\Delta t} \Velocity^{n+1} 
    - \frac{\beta}{\Reynolds} \Delta \Velocity^{n+1}
    & = - \nabla \Pressure^{n+1} 
    - \sum_{j=1}^g \frac{b_j}{\Delta t} \Velocity^{n+1-j} 
    + \mathcal{E}^{n+1}\left( - (\Velocity \cdot \nabla) \Velocity + \nabla\cdot \ExtraStress \right), 
\end{align}
subject to mixed Dirichlet and homogeneous Neumann boundary conditions.
We solve the approximate MFP equation based on the current velocity $\Velocity^{n+1}$.
Applying the BDF scheme to the fractional mode equations results in 
\begin{align}
    \sum_{j=0}^g b_j \Vec{\Phi}_k^{n+1-j} 
    + \tfpoles \Delta t \Vec{\Phi}_k^{n+1} - \tfweights \Delta t \Vec{\Phi}^{n+1} 
    = 0,
\end{align}
for $k=1, \ldots, m$, which we can explicitly solve for $\Vec{\Phi}_k^{n+1}$ 
\begin{align}
    \label{eq:BDF_Phi_k_n+1}
    \Vec{\Phi}_k^{n+1} 
    = \frac{
        - \sum_{j=1}^g b_j \Vec{\Phi}_k^{n+1-j} 
        + \tfweights \Delta t \Vec{\Phi}^{n+1}
    }{b_0 + \tfpoles \Delta t}.
\end{align}
For the MFP equation, we insert Equation \eqref{eq:approximate_macroscopic_navier_stokes_fokker_planck_2} to replace the time derivatives of the fractional modes and obtain 
\begin{align}
    \PDiff{t} \Vec{\Phi} 
    + \sum_{k=1}^m \big(
    \left(\Velocity \cdot \nabla \right)  
    - \comdiff \Laplace 
    - \tensor{A}(\Grad \Velocity)\big) \left(- \tfpoles \Vec{\Phi}_k + \tfweights \Vec{\Phi}\right)
    = 0.
\end{align}
Applying the BDF scheme and replacing the occurring $\Vec{\Phi}_k^{n+1}$ terms inserting \eqref{eq:BDF_Phi_k_n+1}, we obtain 
\begin{multline}
    \label{eq:FP_discrete}
    \Big( b_0 
    + \eta
        \left( (\Velocity^{n+1} \cdot \nabla) 
        - \comdiff \Laplace 
        - \tensor{A}(\Grad \Velocity^{n+1})\right) 
    \Big) \Vec{\Phi}^{n+1} \\  
    = - \sum_{j=1}^g b_j \left(
    \Vec{\Phi}^{n+1-j} 
    + \sum_{k=1}^m \eta_k
        \left( (\Velocity^{n+1} \cdot \nabla)   
        - \comdiff \Laplace 
        - \tensor{A}(\Grad \Velocity^{n+1}) \right)\Vec{\Phi}_k^{n+1-j}
    \right), 
\end{multline}
where the coefficients $\eta$
and $\eta_k$ are defined by 
\begin{align}
    \eta := \sum_{k=1}^m \frac{b_0 w_k \Delta t }{b_0 + \lambda_k \Delta t}, \qquad
    \eta_k := \frac{ \tfpoles \Delta t}{b_0 + \tfpoles \Delta t}.
\end{align}
We close the system with homogeneous Neumann boundary conditions and solve for $\Vec{\Phi}^{n+1}$.
The update of the fractional modes $\Vec{\Phi}_k^{n+1}$ is only a linear push forward using \eqref{eq:BDF_Phi_k_n+1} and does not require solving a system of equations.

\subsection{Finite element approximation}

Using the projection method for the NS system and a standard time discretization of the approximate MFP system, we have to solve a Poisson-type equation \eqref{eq:NS_Poisson}, a Helmholtz-type equation \eqref{eq:NS_Helmholtz} and the advection-diffusion-reaction system \eqref{eq:FP_discrete}.
Note that, due to the projection method, we never solve a saddle-point problem, and the continuity equation of the NS system is imposed weakly. 
The corresponding finite element formulations are described below.

The spatial domain $\Omega \subset \R^d$, $d=2,3$ is discretized using an unstructured mesh of hexahedral (quadrilateral in 2D) elements $\mathcal{T}_h$, and we replace the infinite-dimensional spaces in the variational setting with second-order continuous finite element spaces in the physical coordinate: 
\begin{alignat}{4}
    S
    & = \{ s \in H^1(\CoordinateSpace) \ 
    && | \ s_{|K} \in Q_2(K) \ 
    &&& \forall K \in \mathcal{T}_h \}, \\
    V 
    & = \{ \Vec{v} \in H^1(\CoordinateSpace)^d \ 
    && | \ \Vec{v}_{|K} \in Q_2(K)^d \ 
    &&& \forall K \in \mathcal{T}_h \}, \\ 
    M 
    & = \{ \Vec{m} \in H^1(\CoordinateSpace)^{3d-2} \ 
    && | \ \Vec{m}_{|K} \in Q_2(K)^{3d-2} \ 
    &&& \forall K \in \mathcal{T}_h \},
\end{alignat}
for the pressure, the velocity, and the modes of the MFP system, respectively.
For Equation \eqref{eq:NS_Poisson} of Poisson-type, the finite element formulation reads as follows: Find $\Pressure^{n+1} \in S$, such that $\forall s \in S$, 
\begin{align}
    \label{eq:discrete:pressure}
    - \left( \Grad \Pressure^{n+1}, \Grad s \right)_{\CoordinateSpace}
    &= \left(\nabla \cdot \vec{\Xi}^{n+1}, s \right)_{\CoordinateSpace} 
    - \frac{b_0}{\Delta t} \left( \Velocity_D^{n+1} \cdot \Vec{n}, s\right)_{\Gamma_D} 
    + \left( \vec{\Xi}^{n+1} \cdot \Vec{n}, s\right)_{\partial \Omega}.
\end{align}
The curl operator occurring in $\vec{\Xi}^{n+1}$ is computed as the average of all local neighboring elements projected onto the finite element space. 
In the case of pure Neumann boundary conditions, i.e., $\Gamma_D = \emptyset$, the system is closed by a zero mean condition on the pressure 
\begin{equation}
    \label{eq:mean_zero_pressure_condition}
    \int_{\CoordinateSpace} \Pressure^{n+1} \,\Diff \Coordinate = 0.
\end{equation}
For the Helmholtz-type Equation \eqref{eq:NS_Helmholtz} with mixed boundary conditions, we obtain: Find $\Velocity^{n+1} \in \{\Vec{v} \in V, \Vec{v}|_{\Gamma_D} = \Velocity_D^{n+1}\},$ such that for all $\Vec{v} \in \{\Vec{v} \in V, \Vec{v}|_{\Gamma_D}  = \Vec{0}\}$
\begin{align}
    \frac{b_0}{\Delta t} \left(\Velocity^{n+1}, \Vec{v} \right)_{\CoordinateSpace} 
    - \frac{\beta}{\Reynolds} \left(\Grad \Velocity^{n+1}, \Grad \Vec{v}\right)_{\CoordinateSpace} 
    =  
    - \left(\nabla \Pressure^{n+1},\Vec{v}\right)_{\CoordinateSpace} 
    - \left(\vec{\Xi}^{n+1}, \Vec{v}\right)_{\CoordinateSpace}.
\end{align}
Since the pressure term is not partially integrated, a homogeneous Neumann boundary condition for the velocity is imposed naturally on $\Gamma_N$.
Finally, the finite element formulation for the MFP equation reads as find $\Vec{\Phi}$ such that for all $\Vec{m} \in M$
\begin{multline}
    \label{eq:discrete:modes}
    b_0 \left(\Vec{\Phi}^{n+1}, \Vec{m}\right)_{\CoordinateSpace}
    + \eta
    \Bigl(
        \left((\Velocity^{n+1} \cdot \nabla) \Vec{\Phi}^{n+1}, \Vec{m} \right)_\Omega 
        +  \comdiff \left(\Grad \Vec{\Phi}^{n+1}, \Grad \Vec{m}\right)_\Omega  
        - \left(\tensor{A}(\Grad \Velocity^{n+1}) \Vec{\Phi}^{n+1}, \Vec{m}\right)_\Omega 
    \Bigr) \\
    = - \sum_{j=1}^g b_j 
    \Biggl(
        \sum_{k=1}^m \eta_k
        \Bigl(
            \big(
                (\Velocity^{n+1} \cdot \nabla) \Vec{\Phi}_k^{n+1-j}, \Vec{m}
            \big)_\Omega
            + \comdiff \bigl( \Grad \Vec{\Phi}_k^{n+1-j}, \Grad \Vec{m}\bigr)_\Omega
            - \big(
                \tensor{A}(\Grad \Velocity^{n+1})\Vec{\Phi}_k^{n+1-j}, \Vec{m}
            \big)_\Omega 
        \Bigr) \\
        + \bigl(\Vec{\Phi}^{n+1-j}, \Vec{m}\bigr)_\Omega
    \Biggr).
\end{multline} 
Again, the homogeneous Neumann boundary condition is imposed naturally on $\partial \CoordinateSpace$.

Now, the linear algebraic systems are obtained by expanding $\Velocity,\Pressure,\Vec{\Phi}, \Vec{\Phi}_1, \ldots, \Vec{\Phi}_m$ and the test functions with respect to the bases of their respective finite element spaces in Equations \eqref{eq:discrete:pressure}--\eqref{eq:discrete:modes}.

\section{Numerical results}
\label{sec:numerical_experiments}
Considering sufficiently regular scenarios, we firstly investigate the convergence order of our numerical scheme, showing the optimal convergence for the fractional FP equation against an analytical solution and studying the performance of the fully coupled system for both the fractional and the integer-order case using numerical reference solutions.
Secondly, we illustrate the influence of the order of the fractional derivative on the flow of the dilute polymeric fluid based on the well-known flow around a cylinder example. Thirdly, the influence on turbulence and flow stabilizing properties of the polymer molecules are further examined for geometries replicating an internally corroded pipeline and a pipeline strain relief.

The numerical scheme can be easily implemented in any modern finite element framework. We utilize MFEM \cite{mfem}, an open-source C++ finite element library, inheriting its scalability for the MPI-parallel implementation running on up to 256 cores on a local computation cluster with two AMD EPYC 9754 128-core processors.

\subsection{Convergence studies}

To establish the convergence order of the numerical scheme for the time-fractional FP equation, we consider a decoupled setting with $\Velocity \equiv \Vec{0}$ over the unit square $\Omega = (0,1)^2$, for which we can derive an analytical solution.
Considering an initial condition $\left(\Phi_\text{ana}\right)_i(0, \Coordinate) = \cos(2 \pi x_1) \cos(4 \pi x_2)$, and homogeneous Neumann boundary conditions, the analytical solution is given as 
\begin{equation}
    \left(\Phi_\text{ana}\right)_i(t, \Coordinate) 
    = \cos(2 \pi x_1) \cos(4 \pi x_2) E_\alpha \left(-\left(\frac{1}{2 \Deborah} + 20 \comdiff \pi^2 \right) t^\alpha \right), 
\end{equation}
where $E_\alpha$ denotes the single-parameter Mittag--Leffler function, see, e.g., \cite{kexue2011laplace}.
For the convergence study, we consider $\comdiff = 10^{-2}$, $\Deborah = 0.5$, $T=1$.
Since this simulation starts with a non-trivial initial condition, we calculate the first step for the BDF2 scheme using an SDIRK2-method, as the BDF2 with a BDF1 start is not L-stable \cite{nishikawa2019large}.
The error decay for the kernel compression method was derived for a weighted Bochner space \cite{khristenko2023solving}, whereby the norm is defined as
\begin{equation}
    \label{eq:weighted_Bochner_space_norm}
    \|u\|_\alpha 
    := \left(\int_0^1 \|u(t)\|_{L^2(\Omega)}^2 t^{2(1-\alpha)} \Diff t \right)^{\frac{1}{2}}.
\end{equation}
We display the $\|\cdot\|_\alpha$-norm of the difference between the numerical and the analytical solution for decreasing time step sizes and various number of modes $m$ in Figure \ref{fig:error_decay}(a).
\begin{figure}
    \centering
    \includegraphics[width=\textwidth, trim=0mm 2mm 0mm 2mm, clip]{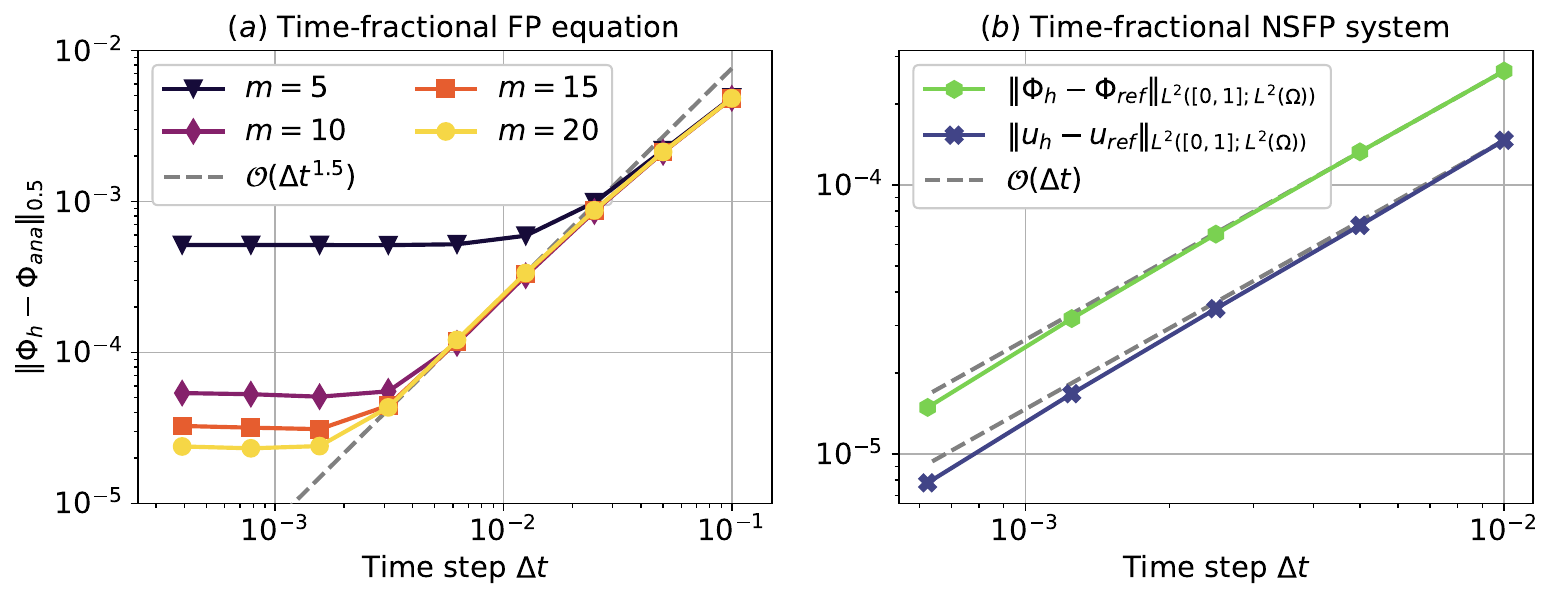}
    \caption{Error decay of the numerical solution (a) in comparison to an analytical solution for the TFFP equation with $\alpha = 0.5$ and (b) in comparison to a numerical reference solution obtained at a finer resolution for the fully coupled time-fractional NSFP system.}
    \label{fig:error_decay}
\end{figure}
The error decays asymptotically with order $\mathcal{O}\left(\Delta t^{1+\alpha}\right)$, whereby the plateau is dependent on the choice of $m$, and thus, due to the accuracy of the approximation of the Riemann--Liouville integral kernel.

For the fully coupled time-fractional NSFP system, no analytical solutions are available, and thus, we rely on the comparison with a numerical reference solution.
Again, we consider $T=1$, $\Omega = (0,1)^2$ and apply no-slip boundary conditions at the top and bottom and periodic boundary conditions at the sides of the domain for the velocity and homogeneous Neumann boundary conditions for the MFP equations.
The flow is driven by an external force $\vec{f}$, starting constant as zero, thus avoiding the necessity of computing an initial step:
\begin{equation}
    \vec{f}(\Time, \Coordinate) 
    = \sin^2 \!\left(\frac{\pi}{2} \min\big\{\max\{0,\, 4\Time - 1\},\, 1\big\}\right) \begin{pmatrix} 1, & 0 \end{pmatrix}\Transpose .
\end{equation}
We set the material parameters as $\beta / \Reynolds = 1$, $\epsilon = 1$, $\Deborah = 0.5$, and $(1-\beta) / \Reynolds = 0.5$ and the reference solution is calculated using a time-step size $\Delta\Time = 7.8125 \cdot 10^{-5}$.
We observe a second-order error decay for the integer-order system and first-order convergence for the fractional system with $\alpha = 0.5$; see Figure \ref{fig:error_decay}(b), which is expected as the extra-stress tensor depends on the time-fractional derivative of the solution. 

\subsection{Flow around a cylinder}

The well-known flow around a cylinder benchmark, see, e.g., \cite{schafer1996benchmark}, is chosen to investigate the influence of the order of the time-fractional derivative on polymeric fluid flow and the drag-reducing effect of the polymer molecules.

We consider the dynamic simulation of the flow through a pipe with a circular cutout $\Omega = (0,2.2) \times (0,0.41) \backslash S$, whereby the obstacle $S = B_{0.05}(0.2,0.2)$ is placed slightly off the center line to break the symmetry of the scenario. 
The mesh comprises 13\,000 rectangular elements, and the time step size is set to $\Delta\Time = 10^{-4}$.  
We consider no-slip boundary conditions at the domain's obstacle, top, and bottom.
At the outflow, we impose homogeneous Neumann boundary conditions on the velocity, and we impose a quadratic inflow profile on the left side of the domain:
\begin{align}
    \label{eq:KVS_inlet_velocity}
    \Velocity_\text{in}(\Time, \Coordinate) =
    \sin^2\!\left(\frac{\pi}{2} \min\{\Time, 1\}\right) \begin{pmatrix} \dfrac{6 x_2 (0.41 - x_2)}{0.41^2}, & 0 \end{pmatrix}\Transpose .
\end{align}
The non-standard time dependency of the inflow profile was introduced to evade the difficulties of time-fractional initial value problems for $\alpha \neq 1$. For this benchmark, the viscosity of the solvent fluid $\beta / \Reynolds = 10^{-3}$, the viscosity contribution of the polymer molecules $(1-\beta) / \Reynolds = 0.5$, the center-of-mass diffusion $\comdiff = 1$, and the Deborah number $\Deborah = 0.5$. The parameters are chosen to obtain the standard periodic Karman-vortex-street for the pure solvent fluid and laminar flow for the dilute polymeric fluid with $\alpha = 1$, respectively.

We consider the same setting as for the dilute polymeric fluid but with $\alpha = 0.5, 0.8$. Both fractional cases result in turbulent flow, whereby the velocity magnitude for $\alpha = 0.5$ is increased compared to $\alpha = 0.8$. The influence of the polymer molecules depending on the order of the time-fractional derivative is quantified in terms of the magnitude of the polymer-induced force in Figure \ref{fig:Influence of fractional order}.
\begin{figure}
    \centering
    \includegraphics[width=\textwidth, trim=0mm 3mm 0mm 2mm, clip]{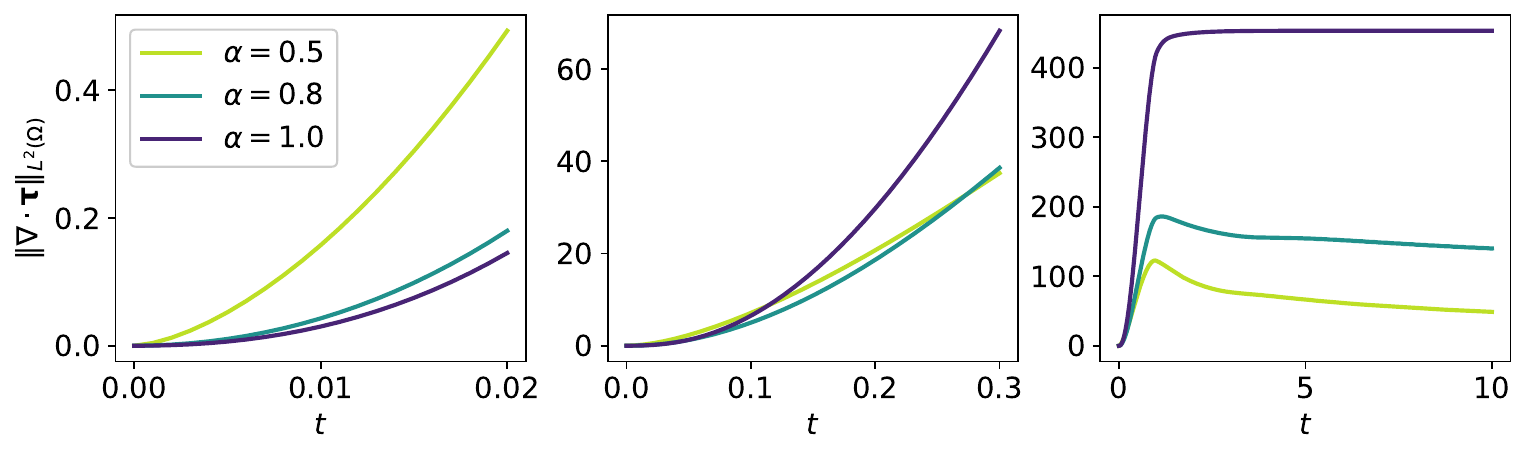}
    \caption{Influence of the fractional order on the magnitude of the polymer-induced force $\|\Grad \cdot \ExtraStress\|_{L^2(\CoordinateSpace)}$.}
    \label{fig:Influence of fractional order}
\end{figure}
\begin{figure}
    \centering
    \includegraphics[width=\textwidth, trim=0mm 2mm 0mm 2mm, clip]{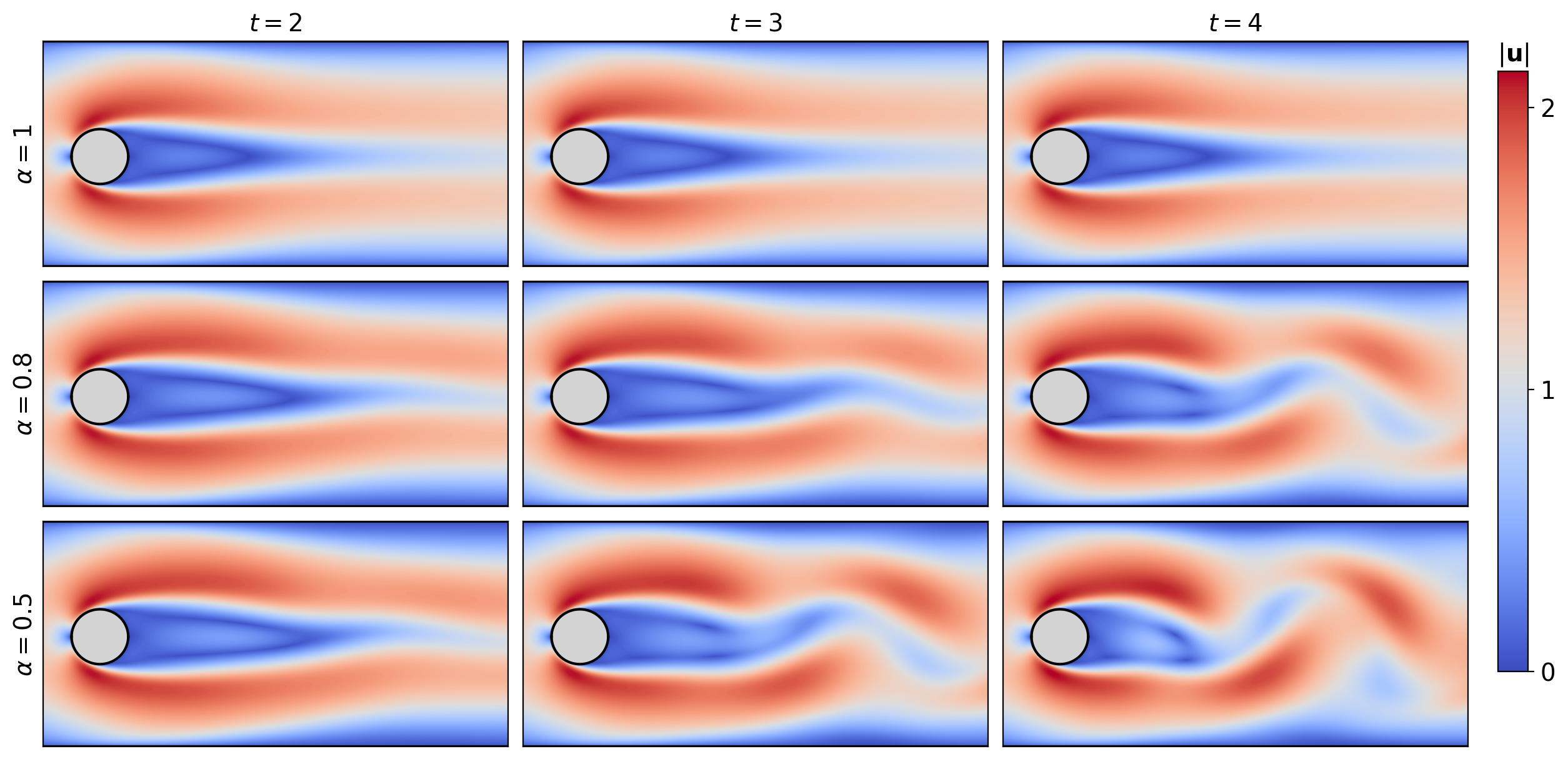}
    \caption{Comparison of the velocity magnitude of dilute polymeric fluid flow for $\alpha = 0.5, 0.8, 1$ at $t=2,3,4$.}
    \label{fig:case4_flow_profiles}
\end{figure}
The extra-stress tensors for smaller $\alpha$ show a faster initial response, but throughout the simulation, the magnitude of the extra-stress tensor remains lower than those of larger values of $\alpha$, resulting in a smaller polymer-induced force. This evolution over time is typically for the single-parameter Mittag--Leffler function and, thus, it is inherent to time-fractional PDEs, e.g., the time-fractional Cahn--Hilliard equations \cite{fritz2022time,khristenko2023solving}. The flow profiles for $\alpha = 0.5, 0.8, 1$ at $t = 2,3,4$ are visualized in Figure \ref{fig:case4_flow_profiles}.
The integer-order case remains consistently laminar, while both time-fractional cases transition from laminar toward turbulent flow. For $\alpha=0.5$, the development of the turbulence is faster and more pronounced than for $\alpha = 0.8$, matching the evolution of the polymer-induced force.

\subsection{Rough wall channel flow}
\label{subsec:rough_wall_channel_flow}

Natural gas and liquid pipelines are subject to internal corrosion \cite{fessler2008pipeline} due to insufficient cathodic protection and coating conditions \cite{vanaei2017review}. 
In this numerical case study, the corroded interior pipeline surface is modeled by a domain $\Omega \subset (0,2.2) \times (0,0.41)$, with 50 irregular quadrilateral cutouts at the top and bottom of the enclosing rectangle. The height at each corner and length of the cutouts are uniformly distributed between $2.5 \cdot 10^{-3}$ and $7.5 \cdot 10^{-3}$ and $4.35 \cdot 10^{-2}$ and $4.45 \cdot 10^{-2}$, respectively. The resulting geometry is visualized in Figure \ref{fig:Omega_Method_NS}. For the numerical simulation, we consider a mesh of $123\,200$ quadrilateral elements and $\Delta t = 10^{-4}$. 

At the domain's top and bottom, we apply no-slip boundary conditions for the fluid and homogeneous Neumann boundary conditions for the MFP equation. At the sides, periodic boundary conditions are prescribed.
A constant external force $\vec{f} \equiv (1,\, 0)\Transpose$ drives the flow in $x_1$ direction. We overcome the computationally expensive build-up times of such scenarios by prescribing an initial velocity field $\Velocity^0(\Coordinate) = (x_2^2(0.41-x_2)^2 / 0.205^2,\, 0)\Transpose$. 

To identify turbulence, we employ the $\omega$-method \cite{liu2016new}, as it inherently overcomes the drawback of choosing suitable threshold values of numerous other vortex identification methods, such as the commonly used $Q$-criterion and $\lambda_2$-criterion.
While the definitions of these criteria are mathematically different, in practice, their results are nearly identical \cite{xi2019turbulent}.
The $\omega$-method is defined as 
\begin{equation}
    \omega = \frac{b}{a + b + \epsilon},
    \quad a = \frac{1}{4} \left\| \Grad \Velocity + \Grad \Velocity \Transpose \right\|_F^2, 
    \quad b = \frac{1}{4} \left\| \Grad \Velocity - \Grad \Velocity \Transpose \right\|_F^2.
\end{equation}
where $\|\cdot\|_F$ is the Frobenius norm, and $\epsilon(t)$ is introduced to avoid non-physical noise. 
As outlined in \cite{dong2018determination}, we empirically choose 
\begin{equation}
    \epsilon(t) = 10^{-3} \max_{\Coordinate \in \CoordinateSpace} \big(b(\Time, \Coordinate) - a(\Time, \Coordinate)\big).
\end{equation}

\begin{figure}[p]
    \centering
    \includegraphics[width=\textwidth, trim=0mm 3mm 0mm 3mm, clip]{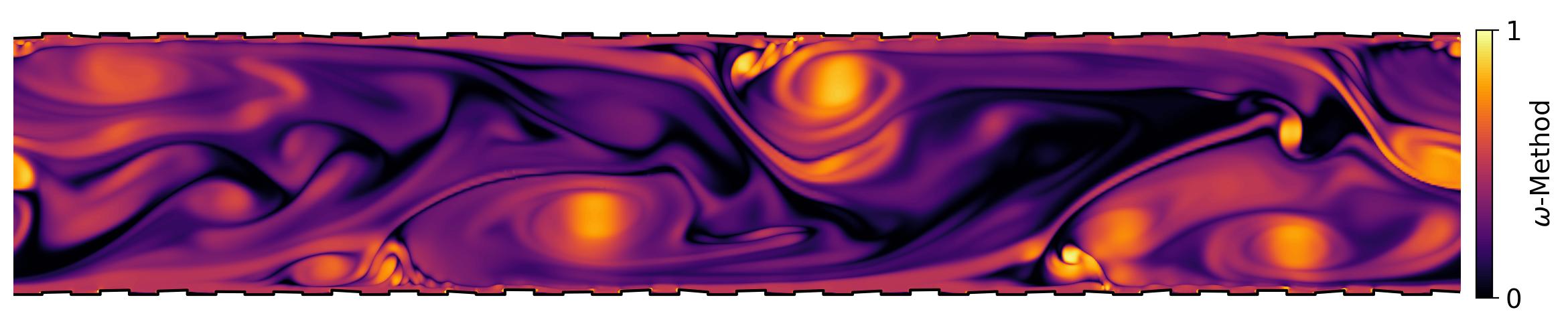}
    \caption{Visualization of the $\omega$-method for the pure solvent fluid at $t=20$.}
    \label{fig:Omega_Method_NS}
\end{figure}


\begin{figure}
    \centering
    \begin{subfigure}[b]{0.49\textwidth}
        \centering
        \includegraphics[width=\textwidth, trim=0mm 5mm 0mm 5mm, clip]{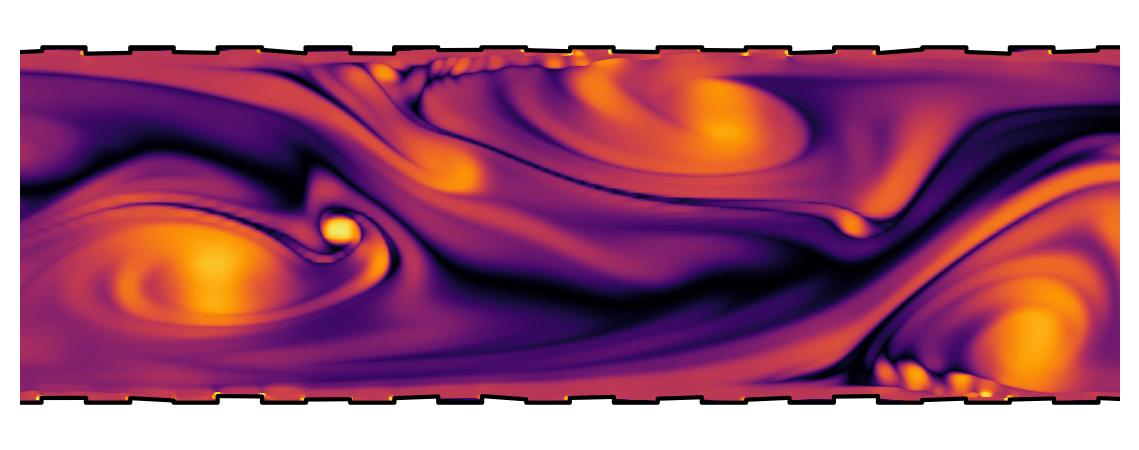}
        \caption{$\comdiff = 1$}
    \end{subfigure} 
    \hfill 
    \begin{subfigure}[b]{0.49\textwidth}
        \centering
        \includegraphics[width=\textwidth, trim=0mm 5mm 0mm 5mm, clip]{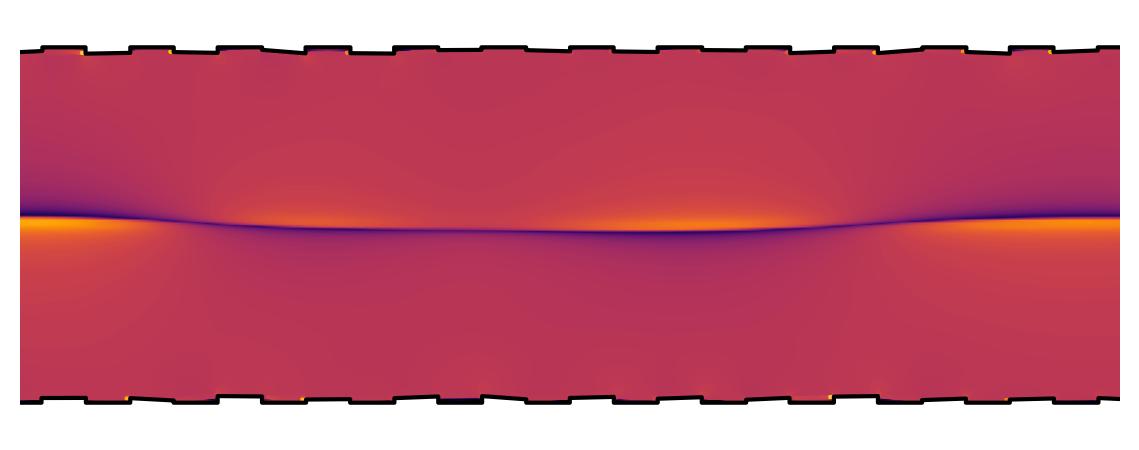}
        \caption{$\comdiff = 10^{-1}$}
    \end{subfigure}
    \medskip

    \begin{subfigure}[b]{0.49\textwidth}
        \centering
        \includegraphics[width=\textwidth, trim=0mm 5mm 0mm 5mm, clip]{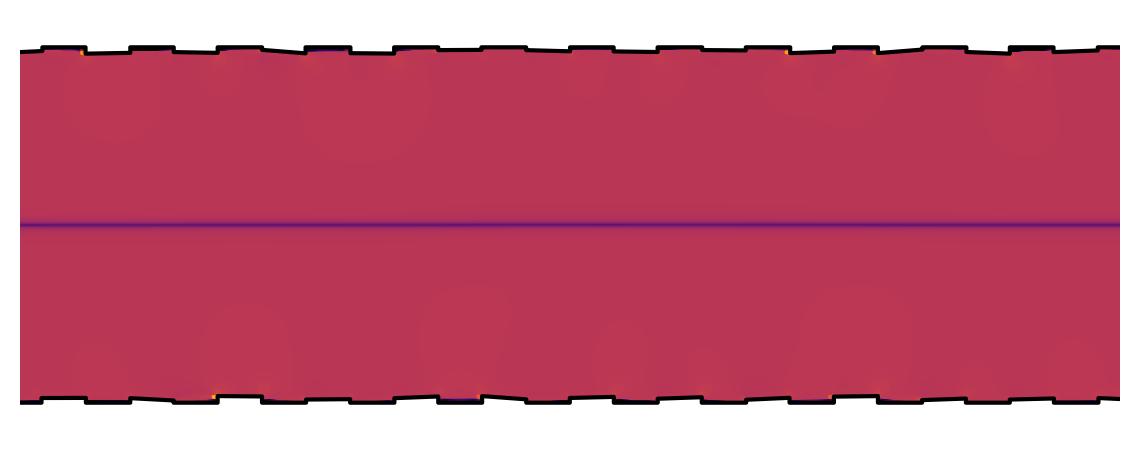}
        \caption{$\comdiff = 10^{-2}$}
    \end{subfigure}
    \hfill 
    \begin{subfigure}[b]{0.49\textwidth}
        \centering
        \includegraphics[width=\textwidth, trim=0mm 5mm 0mm 5mm, clip]{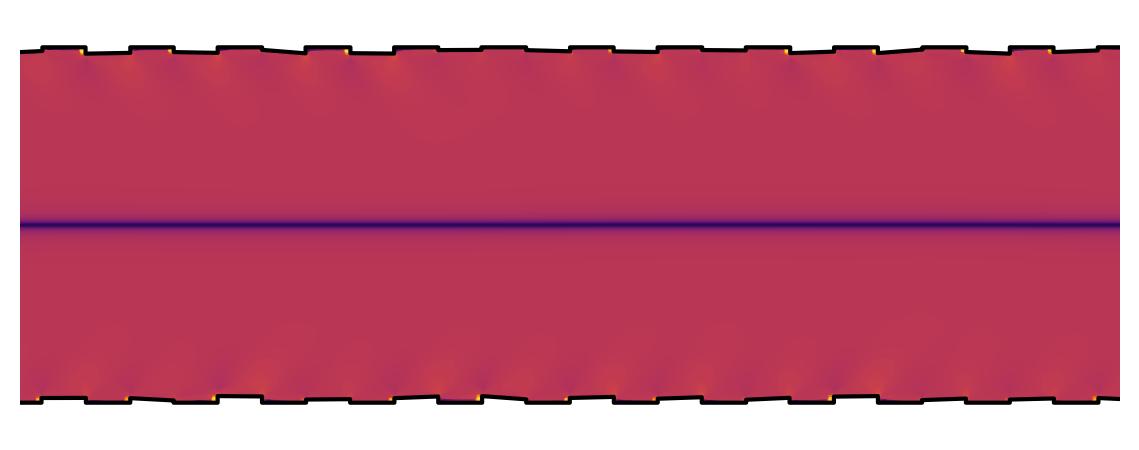}
        \caption{$\comdiff = 10^{-3}$}
    \end{subfigure}
    \caption{Visualization of $\omega$ on $[0.55,1.65] \times [0,0.41]$, for $(1-\beta) / \Reynolds = 10^{-3}$, $\Deborah = 1$, and various $\comdiff$, at $t=20$.}
    \label{fig:Omega_various_comdiff}
\end{figure}


\begin{figure}
    \centering
    \begin{subfigure}[b]{\textwidth}
        \centering
        \includegraphics[width=\textwidth, trim=0mm 2mm 0mm 2mm, clip]{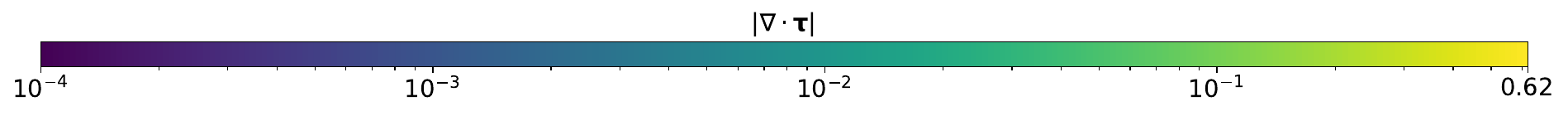}
    \end{subfigure}
    \begin{subfigure}[b]{0.49\textwidth}
        \centering
        \includegraphics[width=\textwidth, trim=0mm 5mm 0mm 5mm, clip]{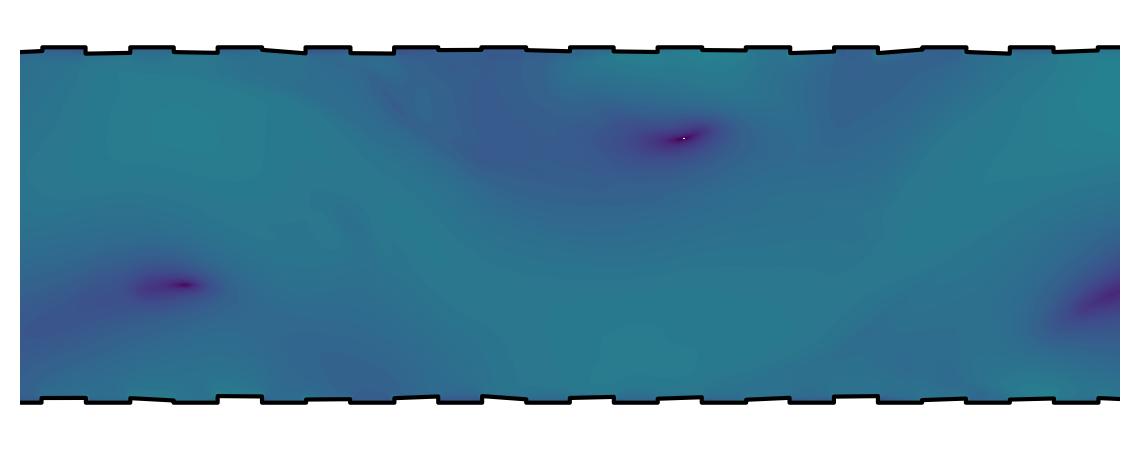}
        \caption{$\comdiff = 1$}
    \end{subfigure} 
    \hfill 
    \begin{subfigure}[b]{0.49\textwidth}
        \centering
        \includegraphics[width=\textwidth, trim=0mm 5mm 0mm 5mm, clip]{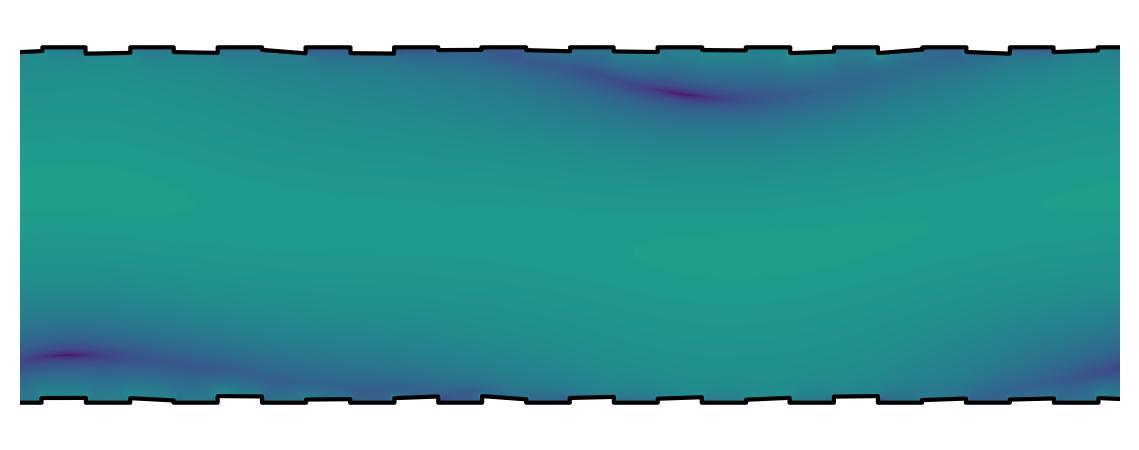}
        \caption{$\comdiff = 10^{-1}$}
    \end{subfigure}
    \medskip

    \begin{subfigure}[b]{0.49\textwidth}
        \centering
        \includegraphics[width=\textwidth, trim=0mm 5mm 0mm 5mm, clip]{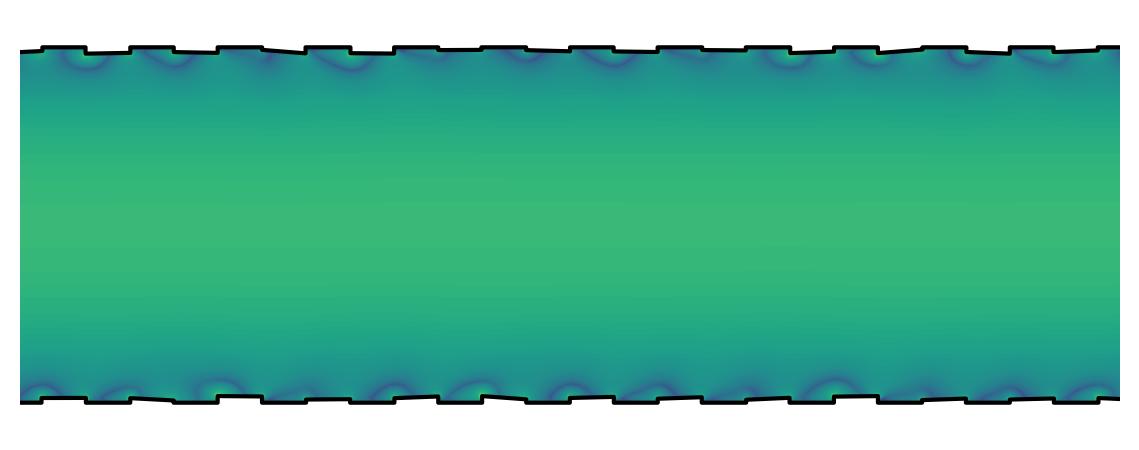}
        \caption{$\comdiff = 10^{-2}$}
    \end{subfigure}
    \hfill 
    \begin{subfigure}[b]{0.49\textwidth}
        \centering
        \includegraphics[width=\textwidth, trim=0mm 5mm 0mm 5mm, clip]{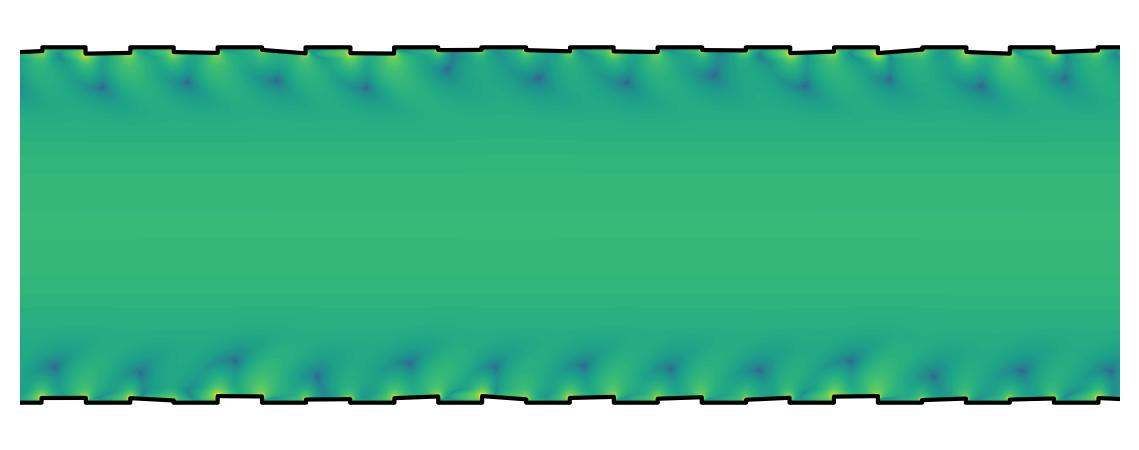}
        \caption{$\comdiff = 10^{-3}$}
    \end{subfigure}
    \caption{Polymeric force magnitude $\Abs{\nabla \cdot \ExtraStress}$ on $[0.55,1.65] \times [0,0.41]$, for various $\comdiff$, $(1-\beta) / \Reynolds = 10^{-3}$, and $\Deborah = 1$, at $t=20$.}
    \label{fig:Forces_various_comdiff}
\end{figure}

As a baseline for the dilute polymeric fluid mixtures, we consider the pure solvent fluid with $\beta /  \Reynolds = 3 \cdot 10^{-5}$. Induced by the irregular geometry of the channel walls, starting at $t=8$, the flow becomes turbulent and remains in this state until the end of the simulation. The fully developed turbulence, at $t=20$, is visualized in terms of $\omega$ in Figure \ref{fig:Omega_Method_NS}. Large values of $\omega$ (bright colors) indicate regions where the flow is mostly vorticity-dominated, while $\omega$ near zero (dark colors) implies strain-dominated flow. We perform simulations of the same scenario across a wide parameter range for dilute polymeric fluids  \cite{DIMITROPOULOS1998433,housiadas2005direct,sousa2011effect,wapperom2000backward,graham2014drag}. Varying the center-of-mass diffusivity $\comdiff$, the viscosity contribution of the added polymer molecules $(1-\beta) / \Reynolds$, and the Deborah number $\Deborah$, we find three qualitatively distinct flow regimes: stable laminar flow, a transition to sustained turbulence as for the pure solvent fluid, and an intermediate regime where turbulence initially develops but subsequently is damped to a laminar flow state. 

The center-of-mass diffusivity is crucial not only in physical aspects but also with respect to the stability of direct numerical simulation of dilute polymeric fluid flow.
To ensure numerical stability, often larger than realistic values are used \cite{graham2014drag}, artificial diffusion terms are added \cite{kim2007effects,xi2019turbulent}, or stabilizing techniques such as upwinding schemes \cite{samanta2013elasto} and slope-limiters \cite{dallas2010strong,shekar2019critical} are employed, thus, introducing numerical diffusion, cf. \cite{dubief2023elasto}. 
While for some mixtures, the center-of-mass diffusivity is as low as $10^{-8}$ \cite{bhave1991kinetic} and thus the diffusion term can be omitted in the model \cite{GRIEBEL201441}, this poses its own numerical challenge, which is outside the scope of this study. 
In the rough wall channel flow scenario, our numerical scheme has shown robustness at $\comdiff \approx 2 /\Reynolds$, corresponding to the magnitude of artificial diffusivity used in other studies, see \cite{graham2014drag} and the references therein.  

The results for $\Deborah = 1$, $\beta / \Reynolds = 3 \cdot 10^{-5}$, $(1-\beta)/\Reynolds = 10^{-3}$, and $\comdiff = 1, 10^{-1}, 10^{-2}, 10^{-3}$, in terms of $\omega$ and $\Abs{\nabla \cdot \ExtraStress}$ at time $t=20$ are displayed in Figures \ref{fig:Omega_various_comdiff} and \ref{fig:Forces_various_comdiff}, respectively.
We observe that for the lower two diffusion values, the flow profile remains laminar, while for $\comdiff=10^{-1}$, the flow initially becomes turbulent before reverting back to a laminar state, and for $\comdiff = 1$, the flow remains turbulent.
The larger the center-of-mass diffusivity, the smoother becomes the extra-stress tensor, and thus, the smaller the polymer-induced force, $\Div \ExtraStress$, as can be seen in Figure \ref{fig:Forces_various_comdiff}. 
Note that the maximal force increases from $8.3 \cdot 10^{-3}$ to 0.62 as the center-of-mass diffusivity decreases from 1 to $10^{-3}$, despite $\Deborah$ and $(1-\beta) / \Reynolds$ being unchanged.   
The force acts opposite the flow direction, resulting in a force profile that reflects the characteristics of the flow field. 
As the center-of-mass diffusivity decreases, the force increases around the geometric features of the domain. 

\begin{figure}
    \centering
    \begin{subfigure}[b]{0.325\textwidth} \centering $t = 9$ \end{subfigure} \hfill 
    \begin{subfigure}[b]{0.325\textwidth} \centering $t = 12$ \end{subfigure} \hfill 
    \begin{subfigure}[b]{0.325\textwidth} \centering $t = 15$ \end{subfigure}  
    \medskip

    \begin{subfigure}[b]{\textwidth}
        \centering
        \includegraphics[width=0.325\textwidth, trim=0mm 5mm 0mm 5mm, clip]{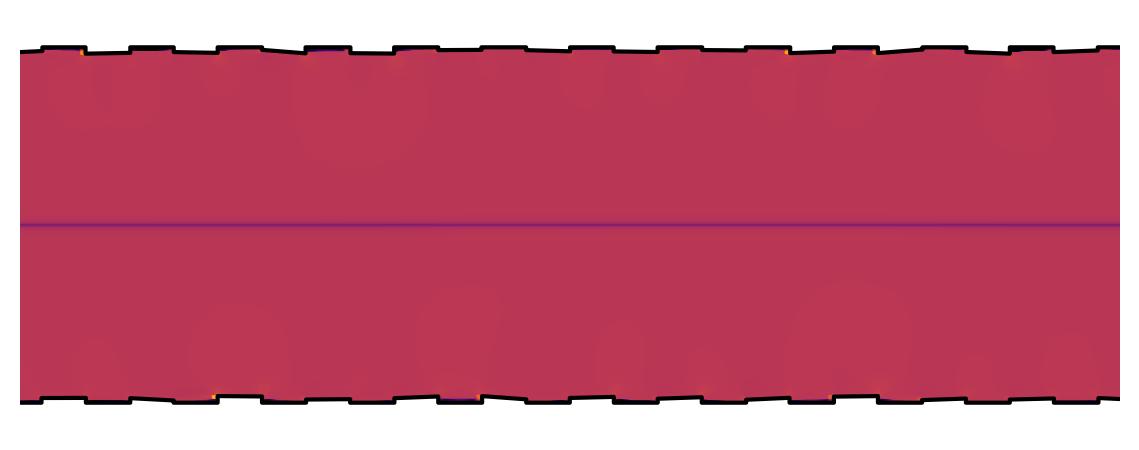}
        \hfill 
        \includegraphics[width=0.325\textwidth, trim=0mm 5mm 0mm 5mm, clip]{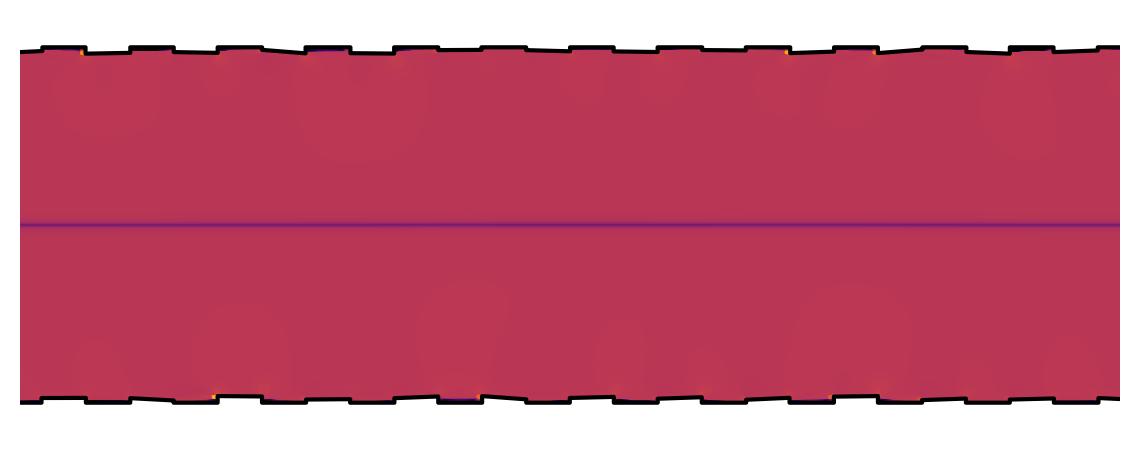}
        \hfill 
        \includegraphics[width=0.325\textwidth, trim=0mm 5mm 0mm 5mm, clip]{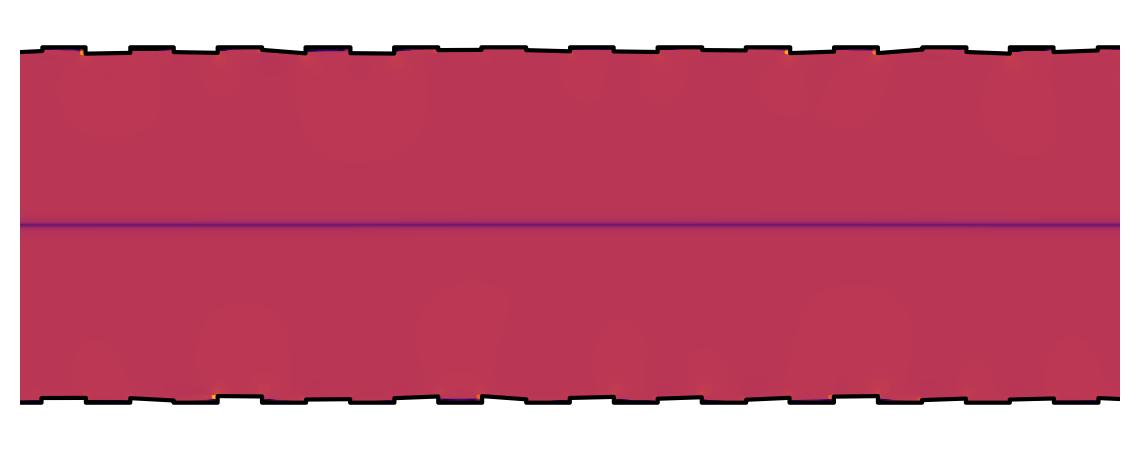}
        \subcaption{$(1-\beta) / \Reynolds = 10^{-3}$}
        \medskip
    \end{subfigure}
    \begin{subfigure}[b]{\textwidth}
        \centering
        \includegraphics[width=0.325\textwidth, trim=0mm 5mm 0mm 5mm, clip]{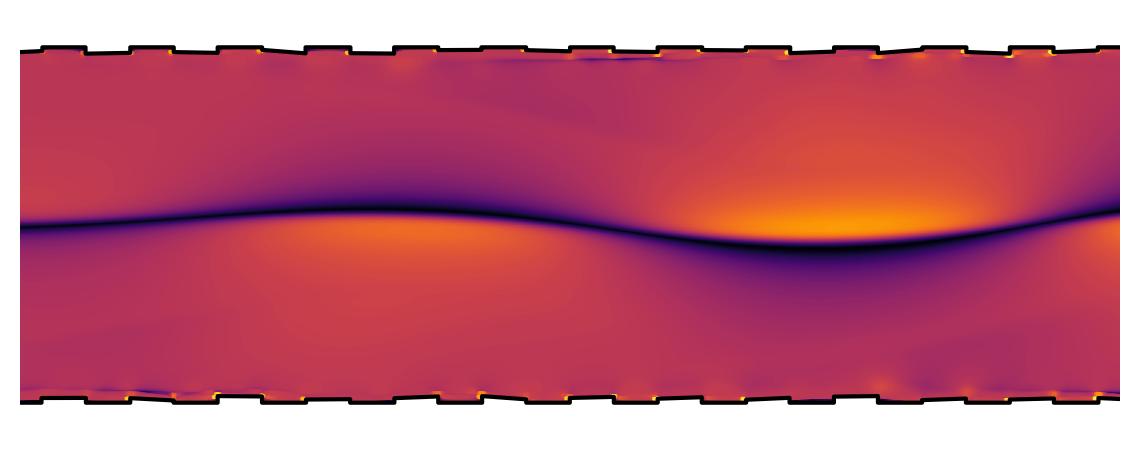}
        \hfill 
        \includegraphics[width=0.325\textwidth, trim=0mm 5mm 0mm 5mm, clip]{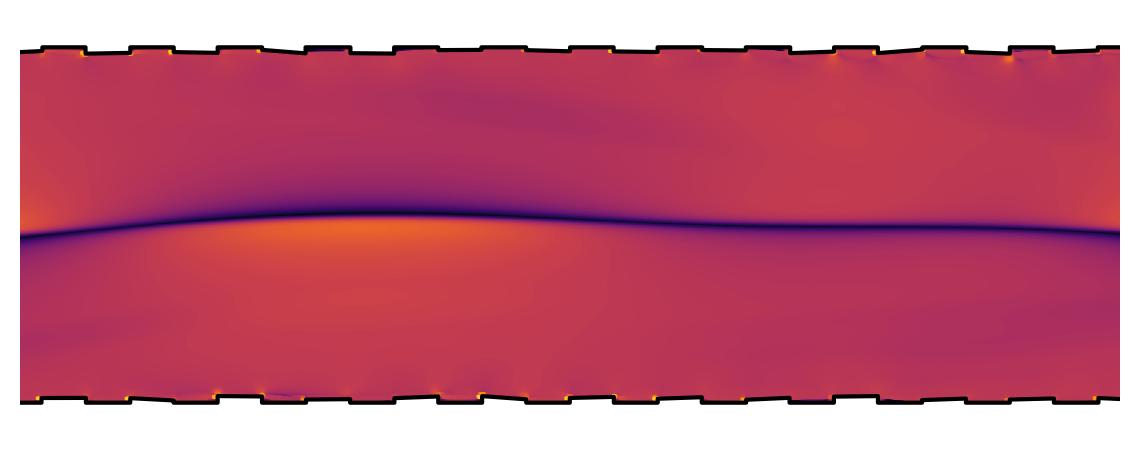}
        \hfill 
        \includegraphics[width=0.325\textwidth, trim=0mm 5mm 0mm 5mm, clip]{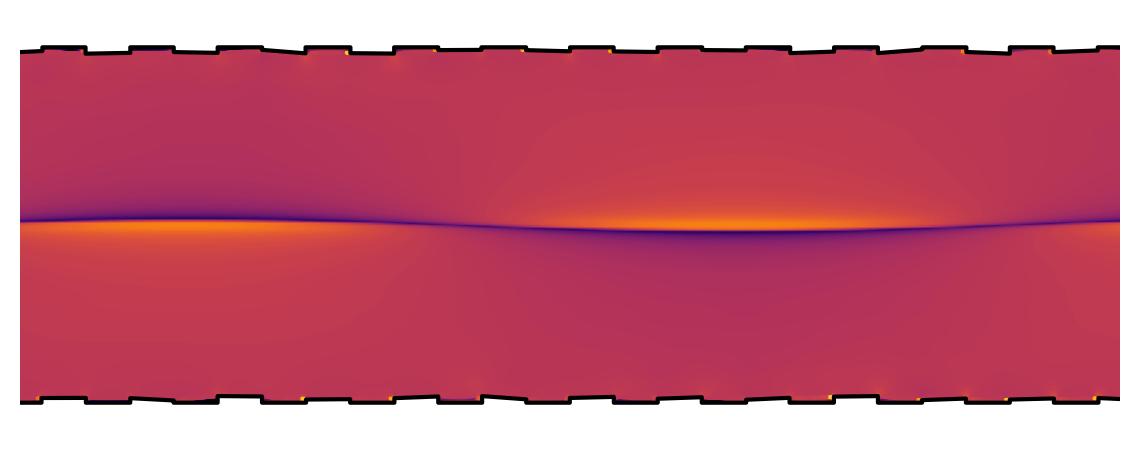}
        \subcaption{$(1-\beta) / \Reynolds = 10^{-4}$}
        \medskip
    \end{subfigure} 
    \begin{subfigure}[b]{\textwidth}
        \centering
        \includegraphics[width=0.325\textwidth, trim=0mm 5mm 0mm 5mm, clip]{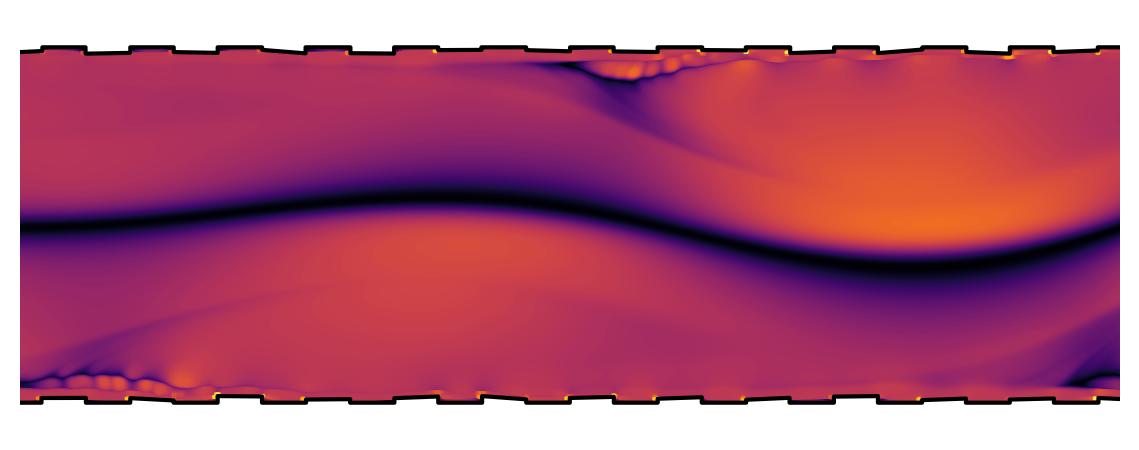}
        \hfill 
        \includegraphics[width=0.325\textwidth, trim=0mm 5mm 0mm 5mm, clip]{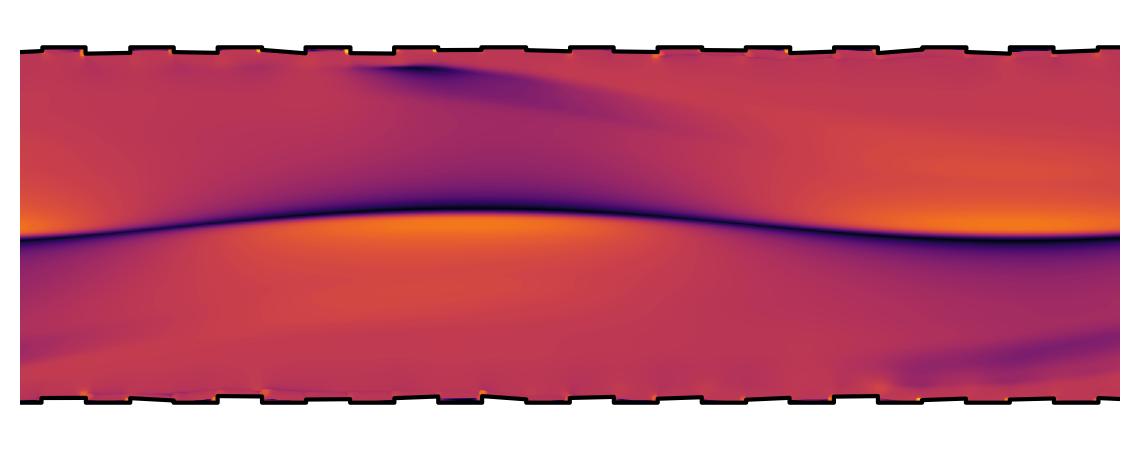}
        \hfill 
        \includegraphics[width=0.325\textwidth, trim=0mm 5mm 0mm 5mm, clip]{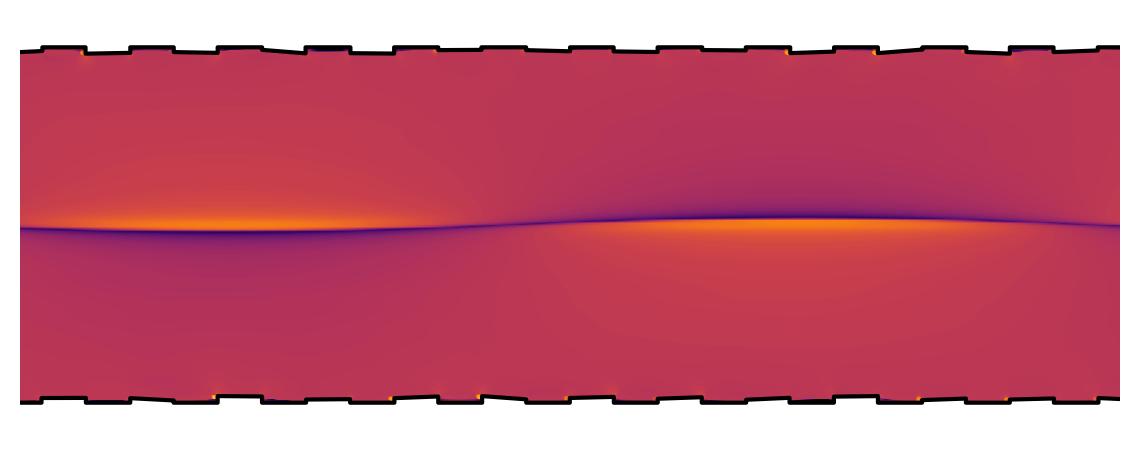}
        \subcaption{$(1-\beta) / \Reynolds = 10^{-5}$}
        \medskip
    \end{subfigure} 
    \begin{subfigure}[b]{\textwidth}
        \centering
        \includegraphics[width=0.325\textwidth, trim=0mm 5mm 0mm 5mm, clip]{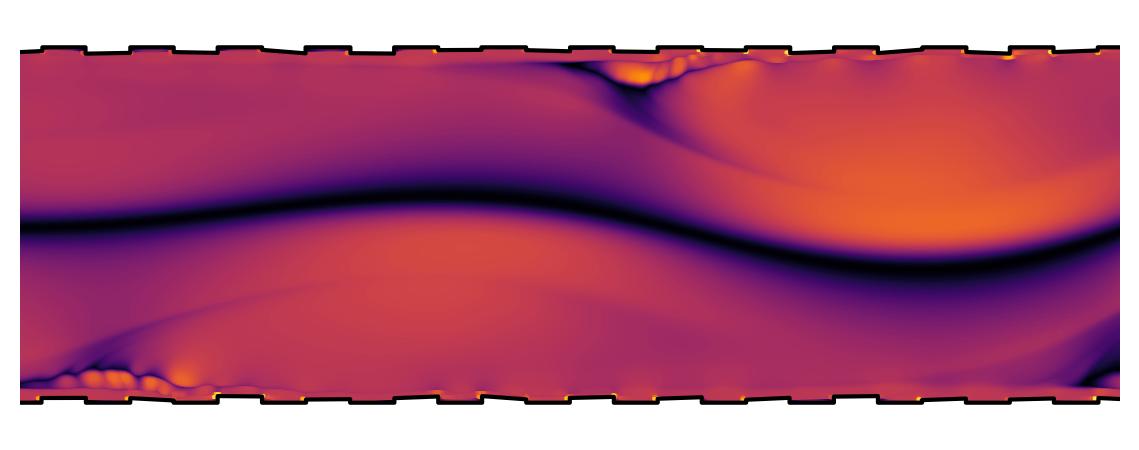}
        \hfill 
        \includegraphics[width=0.325\textwidth, trim=0mm 5mm 0mm 5mm, clip]{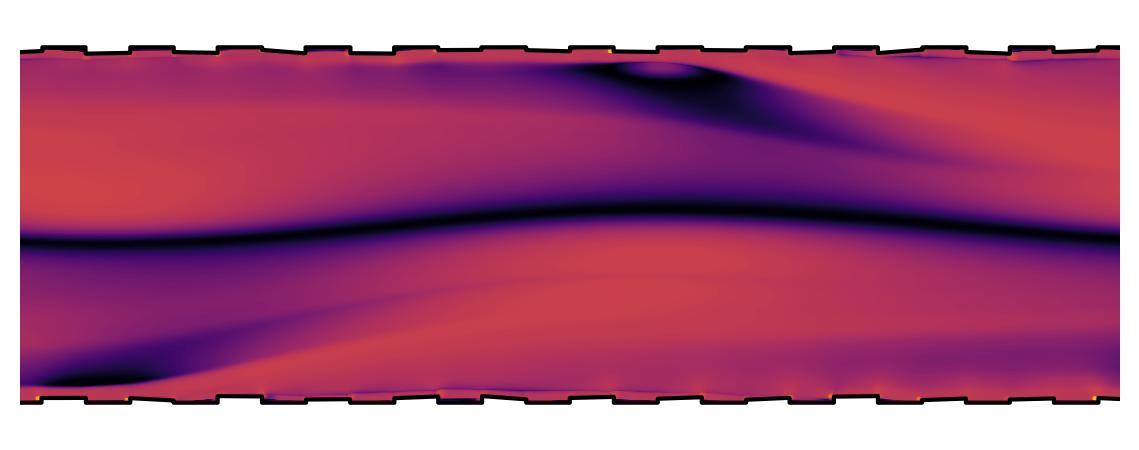}
        \hfill 
        \includegraphics[width=0.325\textwidth, trim=0mm 5mm 0mm 5mm, clip]{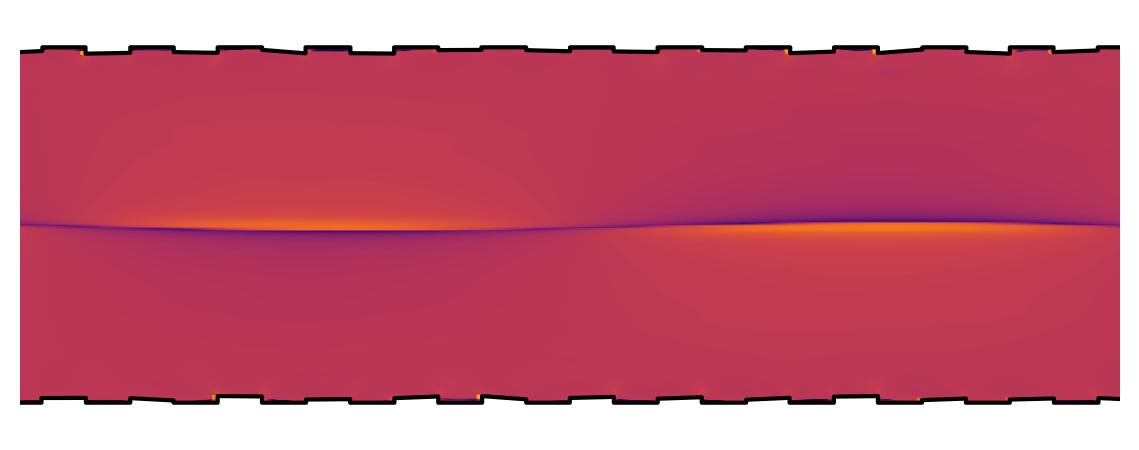}
        \subcaption{$(1-\beta) / \Reynolds = 10^{-6}$}
    \end{subfigure} 
    \caption{Visualization of $\omega$, for $\epsilon=10^{-2}$, $\Deborah = 1$, and various $(1-\beta) / \Reynolds$, at times $t=9,12,15$.}
    \label{fig:Omega_Method_various_gamma}
\end{figure}


\begin{figure}
    \centering
    \begin{subfigure}[b]{0.24\textwidth} \centering $t = 9$ \end{subfigure} \hfill 
    \begin{subfigure}[b]{0.24\textwidth} \centering $t = 10$ \end{subfigure} \hfill 
    \begin{subfigure}[b]{0.24\textwidth} \centering $t = 11$ \end{subfigure} \hfill 
    \begin{subfigure}[b]{0.24\textwidth} \centering $t = 12$ \end{subfigure}  
    \medskip

    \begin{subfigure}[b]{\textwidth}
        \centering
        \includegraphics[width=0.24\textwidth, trim=0mm 5mm 0mm 5mm, clip]{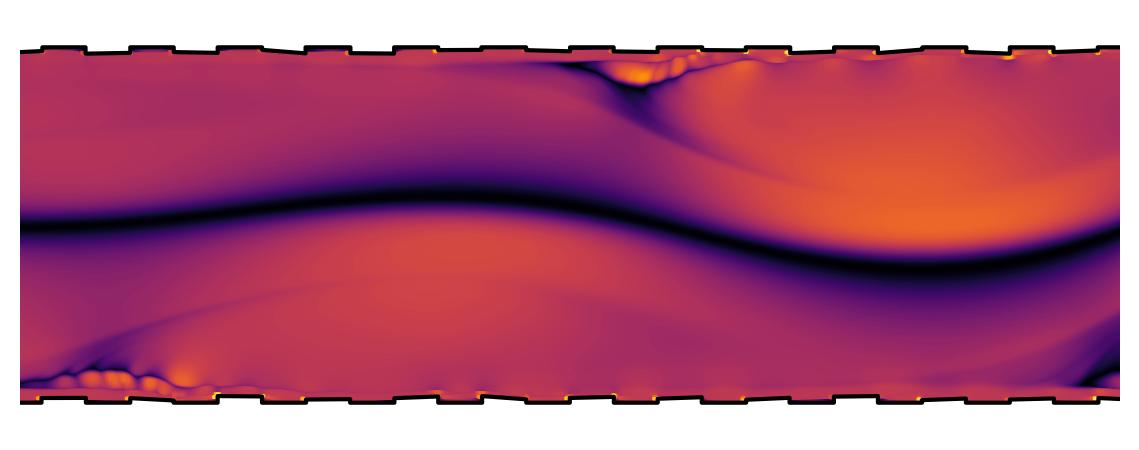}
        \hfill 
        \includegraphics[width=0.24\textwidth, trim=0mm 5mm 0mm 5mm, clip]{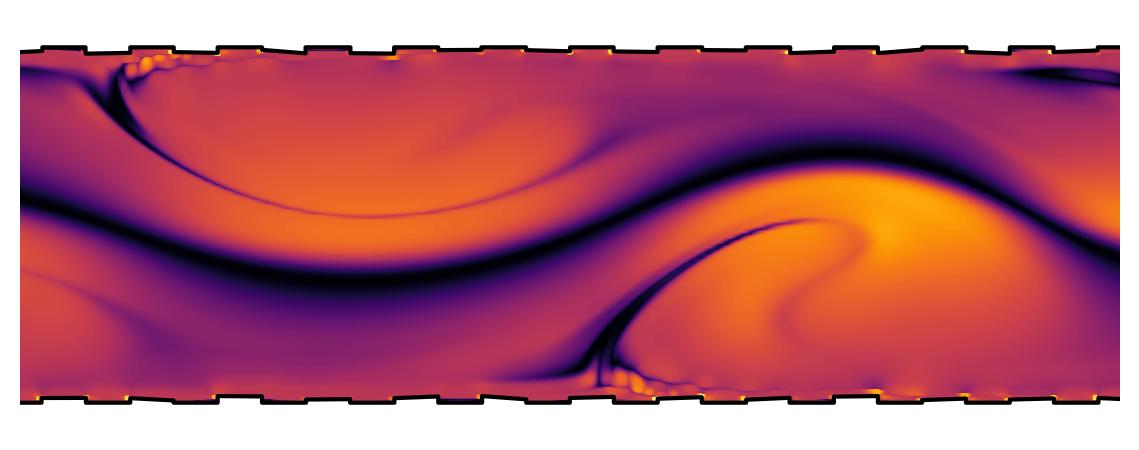}
        \hfill 
        \includegraphics[width=0.24\textwidth, trim=0mm 5mm 0mm 5mm, clip]{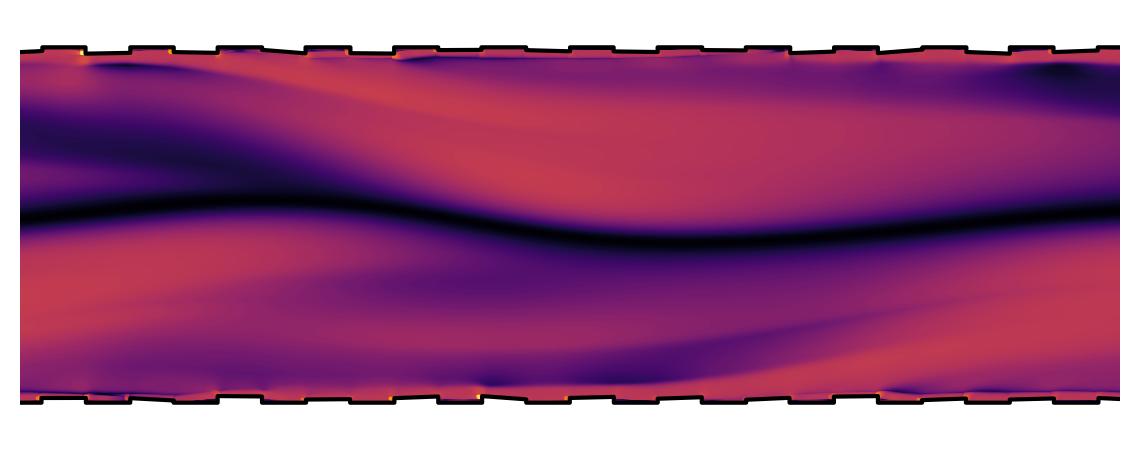}
        \hfill 
        \includegraphics[width=0.24\textwidth, trim=0mm 5mm 0mm 5mm, clip]{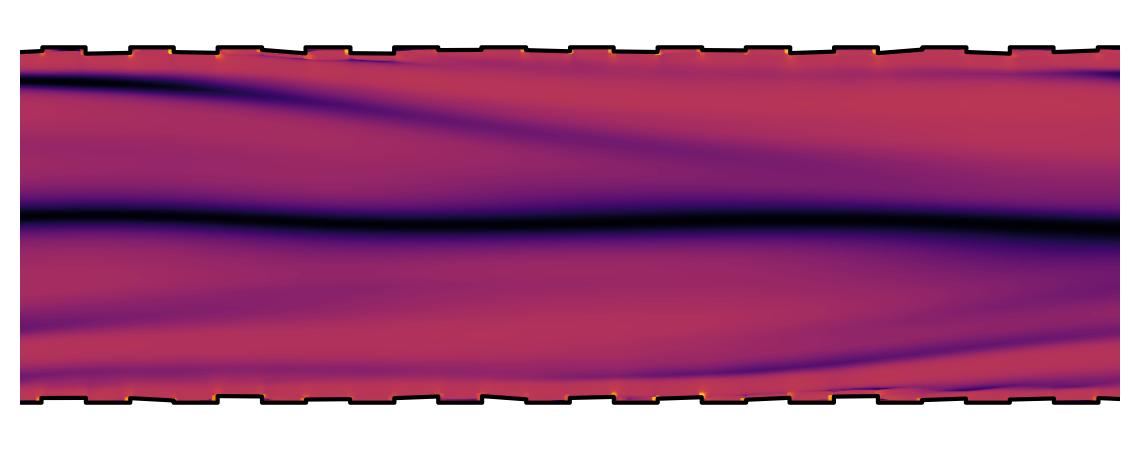}
        \subcaption{$\Deborah = 100$}
        \medskip
    \end{subfigure}
    \begin{subfigure}[b]{\textwidth}
        \centering
        \includegraphics[width=0.24\textwidth, trim=0mm 5mm 0mm 5mm, clip]{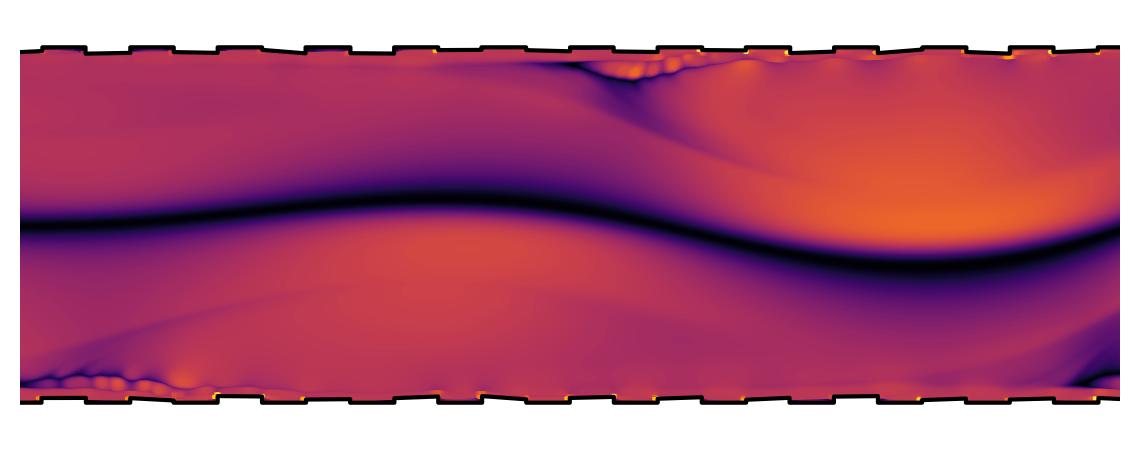}
        \hfill 
        \includegraphics[width=0.24\textwidth, trim=0mm 5mm 0mm 5mm, clip]{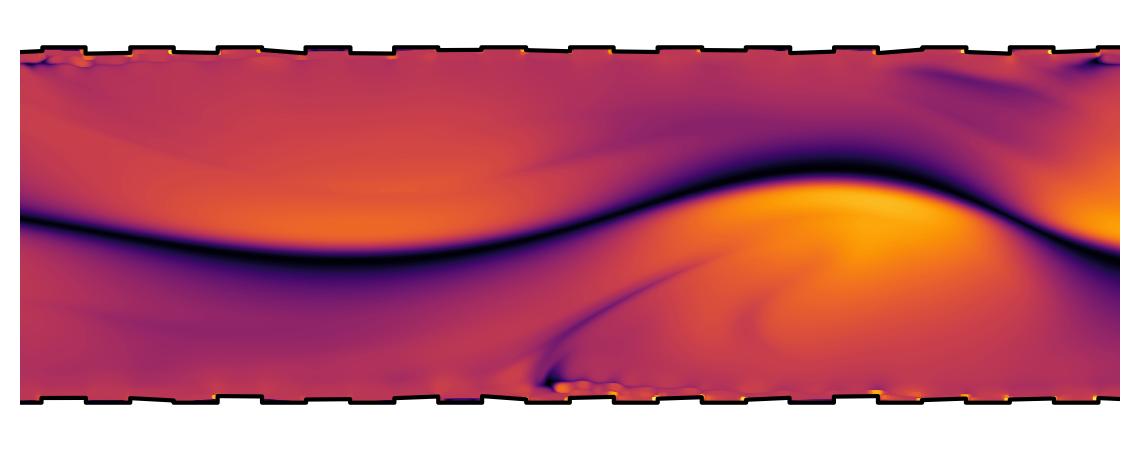}
        \hfill 
        \includegraphics[width=0.24\textwidth, trim=0mm 5mm 0mm 5mm, clip]{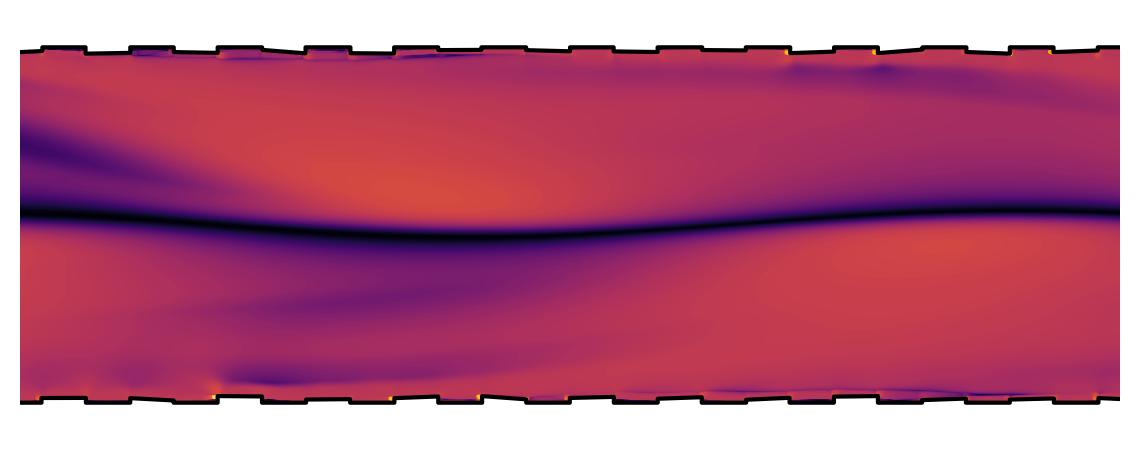}
        \hfill 
        \includegraphics[width=0.24\textwidth, trim=0mm 5mm 0mm 5mm, clip]{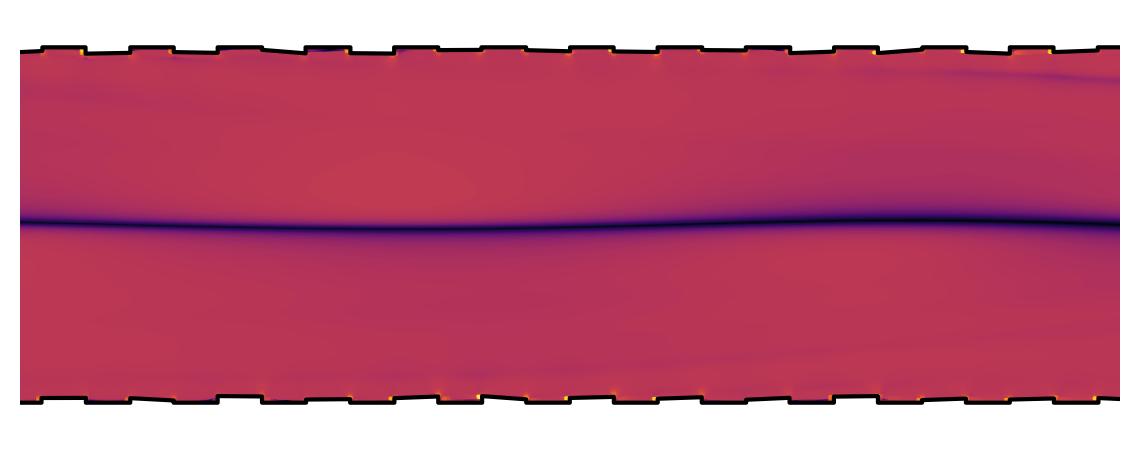}
        \subcaption{$\Deborah = 10$}
        \medskip
    \end{subfigure} 
    \begin{subfigure}[b]{\textwidth}
        \centering
        \includegraphics[width=0.24\textwidth, trim=0mm 5mm 0mm 5mm, clip]{Images/Case5_RWC_2D/half_channel_omega/Omega_half_t_9_NSFP__THP_dt_0.000100_kv_0.000030_g_0.000010_De_0.500000_comdiff_0.010000.jpg}
        \hfill 
        \includegraphics[width=0.24\textwidth, trim=0mm 5mm 0mm 5mm, clip]{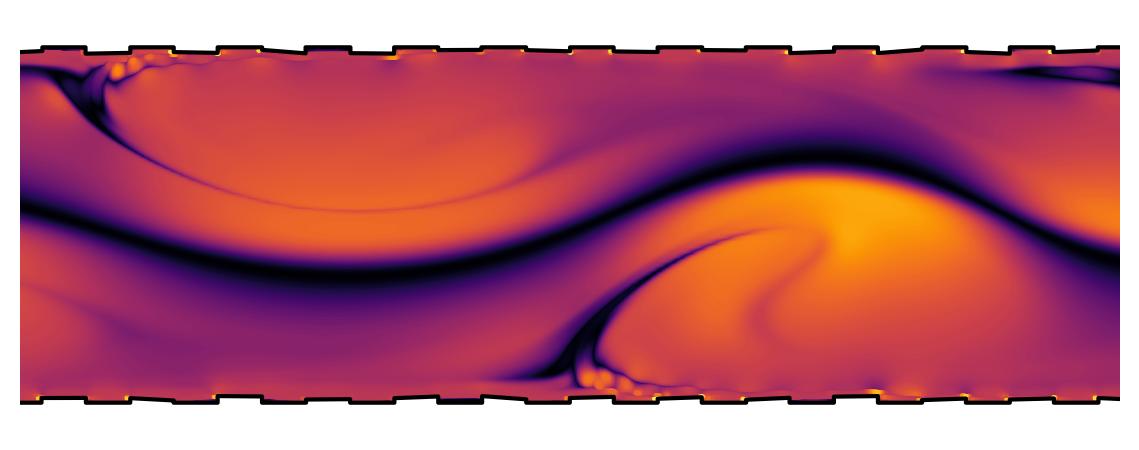}
        \hfill 
        \includegraphics[width=0.24\textwidth, trim=0mm 5mm 0mm 5mm, clip]{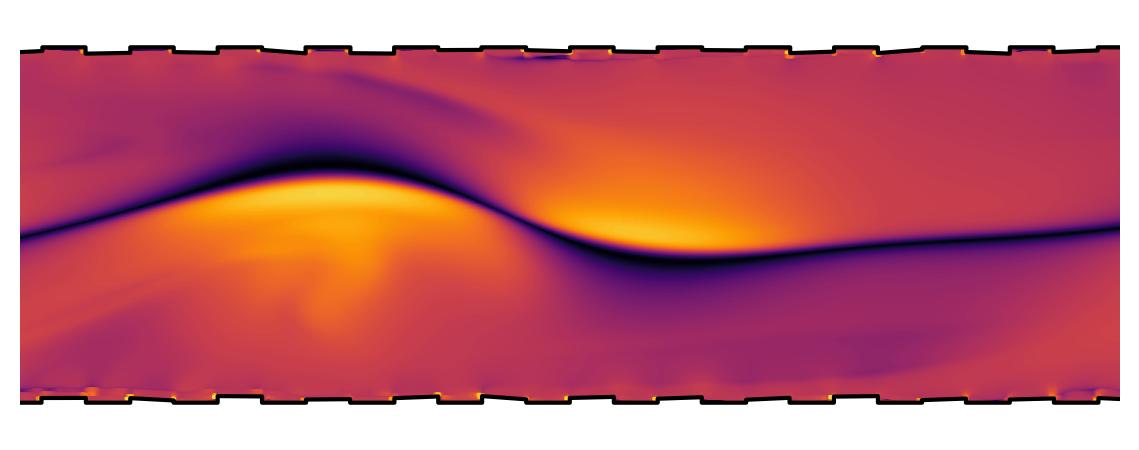}
        \hfill 
        \includegraphics[width=0.24\textwidth, trim=0mm 5mm 0mm 5mm, clip]{Images/Case5_RWC_2D/half_channel_omega/Omega_half_t_12_NSFP__THP_dt_0.000100_kv_0.000030_g_0.000010_De_0.500000_comdiff_0.010000.jpg}
        \subcaption{$\Deborah = 1$}
    \end{subfigure} 
    \caption{Visualization of $\omega$, for $\epsilon=10^{-2}$, $(1-\beta) / \Reynolds = 10^{-5}$, and various $\Deborah$, at times $t=9,10,11,12$.}
    \label{fig:Omega_Method_various_Deborah}
\end{figure}

Figure \ref{fig:Omega_Method_various_gamma} displays $\omega$ for $\Deborah=1$, $\comdiff=0.01$, $(1-\beta) / \Reynolds = 10^{-3},10^{-4},10^{-5},10^{-6}$ at $t=9,12,15$. The larger the viscosity contribution of the polymer molecules, the more stable the flow profile. For $(1-\beta) / \Reynolds = 10^{-3}$, the flow remains laminar throughout the simulation, while for smaller values, the flow develops turbulence before reverting to a laminar flow. The lower $(1-\beta) / \Reynolds$, the more pronounced the turbulence and the longer it takes until the flow stabilizes. 

We show the influence of the Deborah number in Figure \ref{fig:Omega_Method_various_Deborah}. Therefore, we set $\comdiff=0.01$, $(1-\beta) / \Reynolds = 10^{-5}$, and consider $\Deborah=1, 10, 100$.  Independently of the value of $\Deborah$, the simulations develop turbulence at $t=9$, with all simulations showing similar turbulence characteristics. At $t=10$, the results for $\Deborah=1$ and $\Deborah=100$ look similar, while the profile for $\Deborah=10$ appears less turbulent. At $t=11$ and $t=12$, in all simulations, the turbulence is damped, and the flow returns to a more laminar state, whereby in the case of $\Deborah=10$, the turbulence is damped the fastest. The simulation results agree with the onset of drag-reducing effects at moderate Deborah numbers starting at $\Deborah \approx 10$ \cite{graham2014drag}. 

\subsection{Pipeline strain relief}

We show the applicability of the numerical scheme in three dimensions, considering the flow through a geometry modeled after a pipeline strain relief consisting of a circular pipe of radius $0.5$ with $4$ right-angle turns, such that $\Omega \subset [0,12] \times [0,4] \times [0,1]$. The mesh comprises $125\,954$ hexahedral elements, resulting in a total number of $12\,335\,540$ degrees of freedom for the coupled MNSFP system. We use a time step of $\Delta t = 5 \cdot 10^{-4}$. 
For the MFP equation, we consider homogeneous Neumann boundary conditions and, for the velocity, no-slip boundary conditions at the pipeline's walls, homogeneous Neumann boundary conditions at the outlet of the domain, and a quadratic inflow profile at the inlet centered at $\Coordinate = (0,\, 0.5,\, 0.5)\Transpose$
\begin{equation}
    \Velocity_\text{in}(\Time, \Coordinate) 
    = \sin^2\!\left(\frac{\pi}{2} \min\{t, 1\}\right) 
    \begin{pmatrix} 1 - 4\big((x_2-0.5)^2 + (x_3-0.5)^2\big), & 0, & 0\end{pmatrix}\Transpose.
\end{equation}
We compare the solution of the pure solvent fluid with the solution of the NSFP, for $\epsilon = 1$, $\Deborah = 1$, $\beta / \Reynolds = 0.001$, and $(1 - \beta) / \Reynolds = 20$.
The pure solvent flow results in turbulent zones, especially after the last $90^\circ$ bend of the geometry, but no fully turbulent flow, while the dilute polymeric fluid shows no onset of turbulence. 
The streamlines of the two simulations at $t=10$ are displayed in Figure \ref{fig:psr}.

\begin{figure}
    \centering
    \begin{subfigure}[b]{0.49 \textwidth}
        \centering
        \includegraphics[width=\textwidth, trim={5.5cm 1.5cm 2cm 0.75cm},clip]{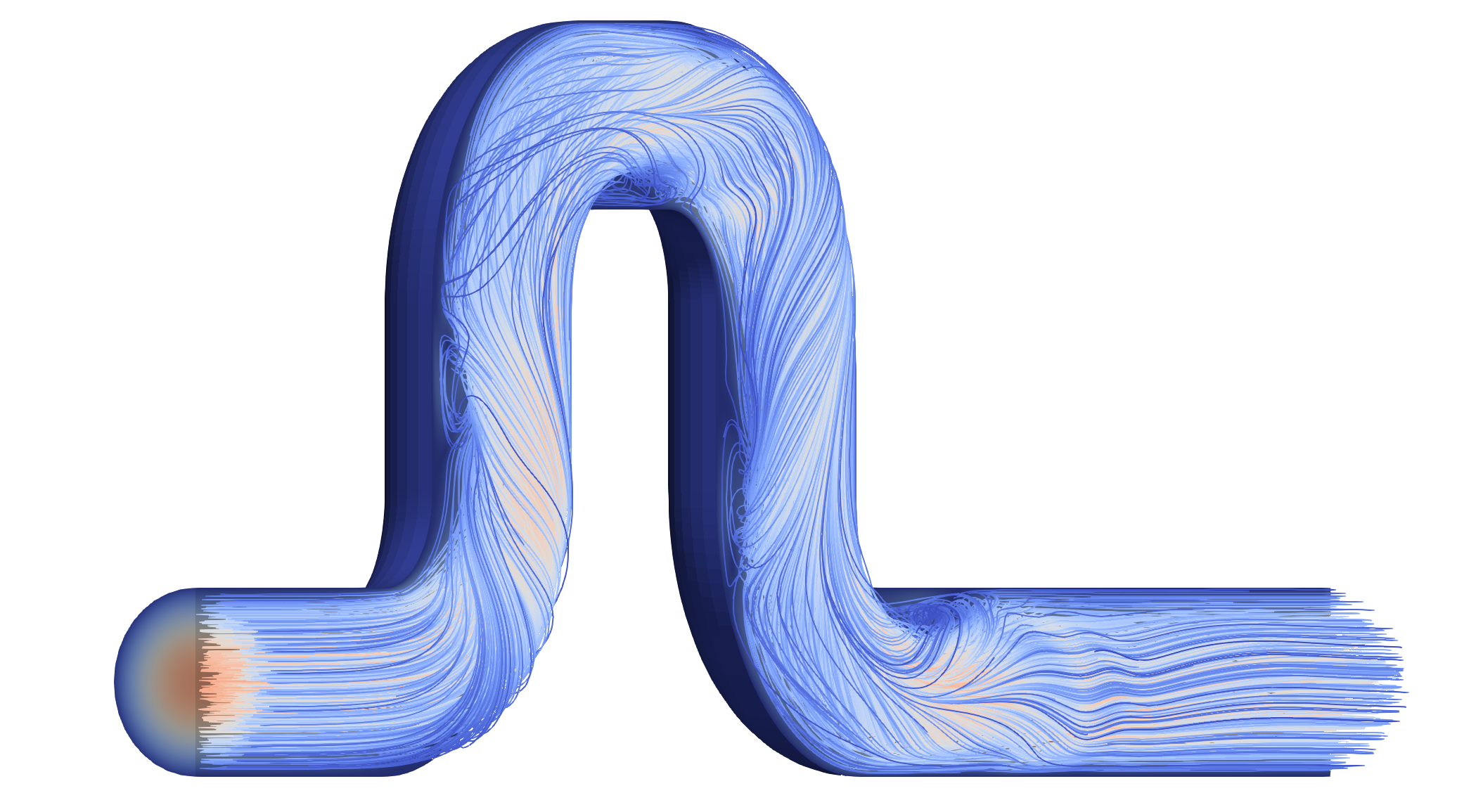}
        \caption{Pure solvent fluid}
        \label{fig:psr_NS}
    \end{subfigure}
    \begin{subfigure}[b]{0.49\textwidth}
        \centering
        \includegraphics[width=\textwidth, trim={5.5cm 1.5cm 2cm 0.75cm},clip]{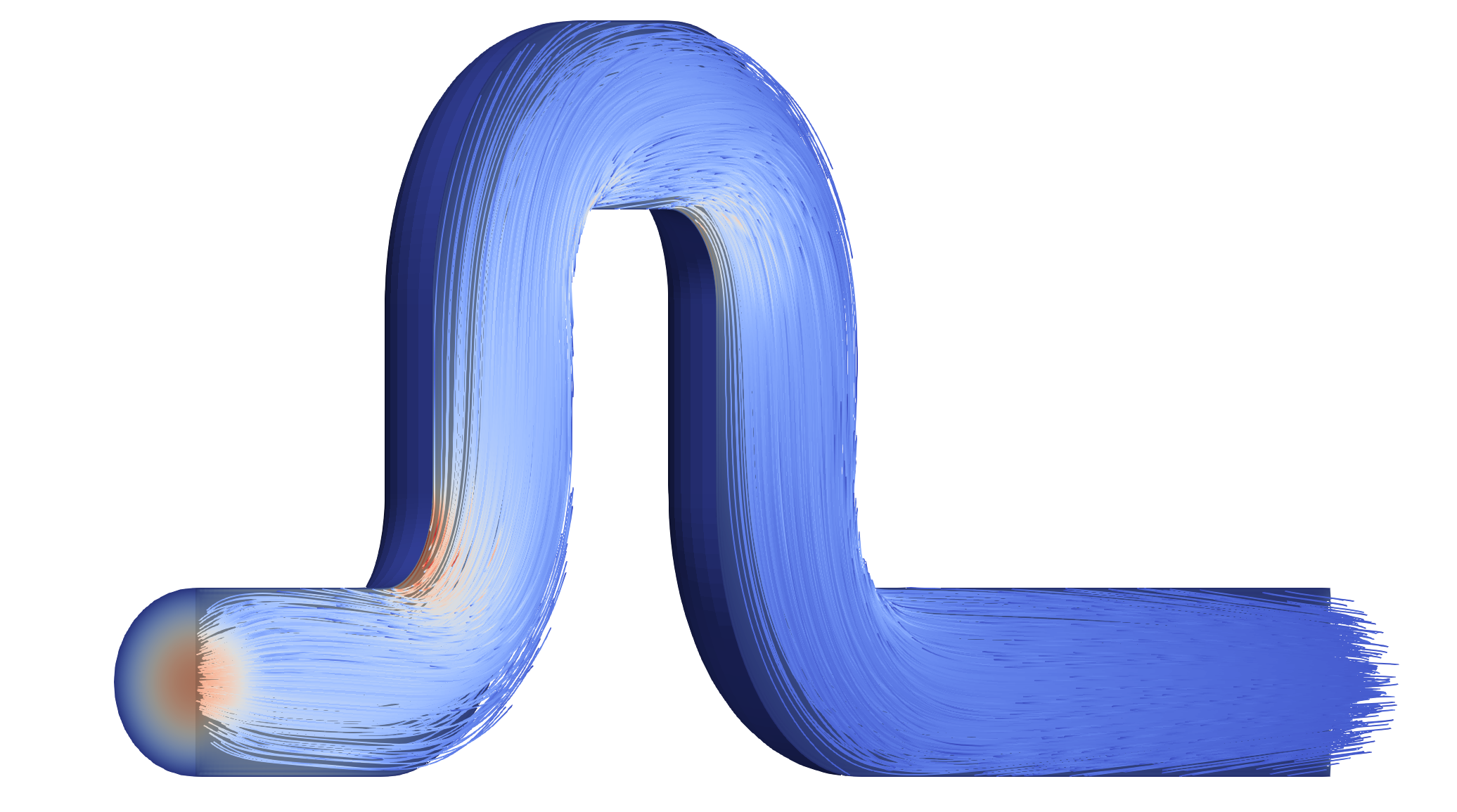}
        \caption{Dilute polymeric fluid}
        \label{fig:psr_NSFP}
    \end{subfigure}
    \caption{Comparison of streamlines of (a) the pure solvent and (b) the dilute polymeric fluid for the pipeline strain relief geometry at $t=10$. The coloring is according to the velocity magnitude $\Abs{\Velocity}$, which ranges from 0 to 1.41.}
    \label{fig:psr}
\end{figure}

\section{Conclusion}
\label{sec:conclusion}
We introduced a purely macroscopic model for the simulation of dilute polymeric fluids with memory effects, proposed a numerical scheme for discretization, and conducted numerical simulations of dilute polymeric fluids in the turbulent regime. 
Our model derives from the application of the Hermite spectral method to the time-fractional Hookean-type Navier--Stokes--Fokker--Planck system, the coupling structure of the Hermite spectral modes, and the orthogonality of Hermite functions in combination with the polynomiality of Kramer's expression. 
We prove that by selecting the Hermite scaling parameter $\scaling = 1/\sqrt{2}$, the coupling tensor exactly resembles the coupling in the micro-macro model, providing a more precise and consistent representation than the arbitrary choices in previous works. 
Our numerical scheme applies a kernel compression method to the time-fractional derivative and a projection method to the Navier--Stokes equations. Combining second-order time integration with extrapolation of the coupling terms, we achieve linear convergence of the fully coupled system independent of the order of the time-fractional derivative.
The highly efficient implementation allows us to study the influence of polymer molecules with memory effects in the turbulent regime, revealing that memory effects decrease the polymer-induced force and, thus, the drag-reducing effect in dilute polymeric fluids.  
The absence of a macroscopic closure of the FENE-type Navier--Stokes--Fokker--Planck system poses a significant challenge in simulating time-fractional dilute polymeric fluids, which can not be modeled using linear spring potentials.
Therefore, future research should prioritize addressing this gap, starting with macroscopic models such as the time-fractional FENE-P model, with the ultimate goal being the simulation of a fully resolved time-fractional FENE-type Navier--Stokes--Fokker--Planck system.

\appendix

\section{Proof of Lemma \ref{lem:tau_equals_tau2}}
\label{apdx:proof_lemma}
\begin{proof}
    Note that for $f \in L^2_{1/w}(\R)$, $f/w \in L^2_{w}(\R)$, and thus, 
    \begin{equation}
        (f- \Tilde{\Pi}_N f, g)_{1/w} = 0, \quad \forall g \in \text{span}\{\HermiteFunction{k}\}_{k=0}^N, 
    \end{equation}
    cf. (7.125) and (7.129) in \cite{shen2011spectral}, and we obtain the best $L^2_{1/w}(\R)$ approximation of $\FPpdf$ using Hermite functions with scaling parameter $\scaling = 1 / \sqrt{2}$ up to degree $N$ as in \eqref{eq:f2}. It remains to show \eqref{eq:tau_equals_tauN}. 
    As outlined in Section \ref{subsec:mm_extra_stress}, we exploit that for $ N \geq 2$ 
    \begin{equation}
         \int_\ConfigurationSpace \left(\Configuration \Configuration \Transpose - \Id \right) f_N(\Configuration) \,\Diff \Configuration 
        = \int_\ConfigurationSpace \left(\Configuration \Configuration \Transpose - \Id \right) f_2 (\Configuration) \,\Diff \Configuration, 
    \end{equation}
    and thus, \eqref{eq:tau_equals_tauN} becomes  
    \begin{equation}
        \label{eq:tau_equals_tau2}
        \ExtraStress := \int_\ConfigurationSpace \left(\Configuration \Configuration \Transpose - \Id \right) f(\Configuration) \,\Diff \Configuration 
        = \int_\ConfigurationSpace \left(\Configuration \Configuration \Transpose - \Id \right) f_2 (\Configuration) \,\Diff \Configuration =: \ExtraStress_2, 
    \end{equation}
    which we do by explicitly calculating $\ExtraStress$ and $\ExtraStress_2$. 
    Due to symmetry, it is sufficient to consider one diagonal entry and one off-diagonal each. 
    For the calculation, we repeatedly apply the integral identities  
    \begin{alignat}{3}
        \label{eq:apdx_integral_constant}
        \int_\R \exp{(-\xi x^2 - \mu x)} \,\Diff x 
        & = \sqrt{\pi} \frac{1 }{\sqrt{\xi}} \exp{\left(\frac{\mu ^2}{4\xi} \right)}, \quad 
        && \text{ for } \xi > 0, \mu \in \R, \\ 
        \label{eq:apdx_integral_linear}
        \int_\R x \exp{(-\xi x^2 - \mu x)} \,\Diff x 
        & = \sqrt{\pi} \frac{\mu }{2 \xi^{3/2}} \exp{\left(\frac{\mu ^2}{4\xi} \right)}, \quad 
        && \text{ for } \xi > 0, \mu \in \R, \\ 
        \label{eq:apdx_integral_quadratic}
        \int_\R x^2 \exp{(-\xi x^2 - \mu x)} \,\Diff x 
        & = \sqrt{\pi}\frac{(2\xi+\mu ^2) }{4 \xi^{5/2}}\exp{\left(\frac{\mu ^2}{4\xi} \right)}, \quad 
        && \text{ for } \xi > 0, \mu \in \R .
    \end{alignat}
    For better readability, we identify the entries of the matrix $\tensor{C}$ as
    \begin{equation}
        \tensor{C} 
        = \left(\begin{array}{ccc}
             A & D & E \\
             D & B & F \\ 
             E & F & C 
        \end{array}\right).
    \end{equation}
    For $\ExtraStress$, we obtain with $f(\Configuration) = \exp(-\Configuration\Transpose\tensor{C}\Configuration)$
    \begin{align}
        \ExtraStress_{11} 
        & = \int_{\ConfigurationSpace}  
        \left({q_1}^2-1\right) f(\Configuration) \Diff \Configuration 
        \\ \label{eq:tau_11}
        & = \frac{\pi^{3/2}}{A^{5/2}} \left(
        \frac{1}{\theta^{1/2}} 
        \left(\frac{A/2-A^2}{\sigma^{1/2}} 
        + \frac{ D^2}{2 \sigma^{3/2}} \right)
        + \frac{1}{2\theta^{3/2}} 
        \left(\frac{E^2}{\sigma^{1/2}} 
        + \frac{ 2 DE \zeta}{\sigma^{3/2}}
        + \frac{D^2\zeta^2}{\sigma^{5/2}}\right)
        \right) 
    \end{align}
    and 
    \begin{align}
        \label{eq:tau_12}
        \ExtraStress_{12} 
        = \int_{\ConfigurationSpace}  
        q_1 q_2 f(\Configuration)
        \Diff \Configuration
        = \frac{\pi^{3/2}}{2A^{3/2}} 
        \left(
        \frac{D}{\theta^{1/2}\sigma^{3/2}} 
        + \frac{1}{\theta^{3/2}} \left(
            \frac{E\zeta}{\sigma^{3/2}} 
          + \frac{D\zeta^2}{\sigma^{5/2}}
        \right)
        \right), 
    \end{align}
    whereby we introduced 
    \begin{align}
        \label{eq:sigma_zeta_theta}
        \sigma := B - \frac{D^2}{A}, \quad
        \zeta  := F - \frac{DE}{A}, \quad 
        \theta := C - \frac{E^2}{A} - \frac{\zeta^2}{\sigma}.   
    \end{align} 
    During the calculation of $\ExtraStress_{11}$ and $\ExtraStress_{12}$, we require $A>0$, $\sigma>0$, and $\theta>0$, to fulfill $\xi > 0$ in equations \eqref{eq:apdx_integral_constant}-\eqref{eq:apdx_integral_quadratic}. The inequalities are exactly Sylvester's criterion applied to $\tensor{C}$, which is positive definite. 
    
    For the Hermite approximation, we note that, for $a=1/\sqrt{2}$, $\chi_0 = 0$, and the entries reduce to  
    \begin{equation}
        \left(\ExtraStress_2\right)_{11} = \chi_2 \HermiteModes{2,0,0}, \quad 
        \left(\ExtraStress_2\right)_{12} = \chi_1 \HermiteModes{1,1,0}.
    \end{equation}
    Again, applying \eqref{eq:apdx_integral_constant}-\eqref{eq:apdx_integral_quadratic} with the same choices for $\xi$ as before, we obtain 
    \begin{align}
        \left(\ExtraStress_2\right)_{11} 
        & = \chi_2 \phi_{2,0,0} 
        = \chi_2 \frac{1}{\sqrt{8}} \left(\frac{\sqrt{a}}{\sqrt[4]{\pi}}\right)^{3} 
        \int_{\ConfigurationSpace}  
        f(\Configuration) (4a^2 {q_1}^2 - 2)
        \, \Diff \Configuration \\ 
        & = 
        \chi_2 \frac{1}{\sqrt{8}} \left(\frac{\sqrt{a}}{\sqrt[4]{\pi}}\right)^{3} 
        \frac{\pi^{3/2}}{A^{5/2}}
        \Bigg[ \frac{2}{\theta^{1/2}}
            \Bigg(
                \frac{A a^2 - A^2}{\sigma^{1/2}} + \frac{D^2 a^2}{\sigma^{3/2}} 
            \Bigg) + \frac{a^2}{2 \theta^{3/2}} \Bigg( 
                \frac{4E^2}{\sigma^{1/2}}
                + \frac{8 D E \zeta}{\sigma^{3/2}}
                + \frac{4 D^2 \zeta^2 }{\sigma^{5/2}}
            \Bigg) 
        \Bigg]\\ \label{eq:tau_2_11}
        & = \frac{1}{2a^2} \frac{\pi^{3/4}}{A^{5/2}}  
        \Bigg[ \frac{1}{\theta^{1/2}}
            \Bigg(
                \frac{A a^2 - A^2}{\sigma^{1/2}} + \frac{D^2 a^2}{\sigma^{3/2}} 
            \Bigg) + \frac{a^2}{\theta^{3/2}}\Bigg( 
                \frac{E^2}{\sigma^{1/2}}
                + \frac{2 D E \zeta}{\sigma^{3/2}}
                + \frac{D^2 \zeta^2 }{\sigma^{5/2}}
            \Bigg) 
        \Bigg], 
    \end{align}
    where we reused the coefficients introduced in \eqref{eq:sigma_zeta_theta}.    
    Comparing, \eqref{eq:tau_11} and \eqref{eq:tau_2_11}, we observe that, for $a = 1 / \sqrt{2}$, $\ExtraStress_{11}$ and $\left(\ExtraStress_2\right)_{11}$ coincide.
    We proceed analogously for the off-diagonal entry and compute   
    \begin{align}
        \left(\ExtraStress_2\right)_{12} 
        & = \chi_1 \phi_{1,1,0} 
        = \chi_1 \frac{1}{2} \left(\frac{\sqrt{a}}{\sqrt[4]{\pi}}\right)^{3} 
        \int_{\ConfigurationSpace}
        f(\Configuration) 4 a^2 q_1 q_2
        \, \Diff \Configuration \\ 
        & = \chi_1 \frac{1}{2} \left(\frac{\sqrt{a}}{\sqrt[4]{\pi}}\right)^{3} 
        4a^2 \frac{\pi^{3/2}}{2A^{3/2}}
        \Bigg[
            \frac{D}{\theta^{1/2} \sigma^{3/2}}  
            + \frac{1}{\theta^{3/2}} \left(\frac{E \zeta}{\sigma^{3/2}} + \frac{D\zeta^2}{\sigma^{5/2}} \right) 
        \Bigg], \\ \label{eq:tau_2_12}
        & = \frac{\pi^{3/2}}{2A^{3/2}}
        \Bigg[
            \frac{D}{\theta^{1/2} \sigma^{3/2}}  
            + \frac{1}{\theta^{3/2}} \left(\frac{E \zeta}{\sigma^{3/2}} + \frac{D\zeta^2}{\sigma^{5/2}} \right) 
        \Bigg]. 
    \end{align}           
    Therefore, $\left(\ExtraStress\right)_{12} = \left(\ExtraStress_2\right)_{12}$ independently of the choice $a$.
\end{proof}

\data{The source code will be available on GitHub after acceptance.}

\funding{The work of BW was supported by the Federal Ministry of Education
and Research (BMBF) as part of the "Multi-physics simulations for Geodynamics on heterogeneous
Exascale Systems" (CoMPS) project (FKZ 16ME0651) inside the federal research program on 
"High-Performance and Supercomputing for the Digital Age 2021-2024 -- Research and Investments in
High-Performance Computing."}

\bibliographystyle{elsarticle-num}
\bibliography{references}

\begin{thebibliography}{10}
\expandafter\ifx\csname url\endcsname\relax
  \def\url#1{\texttt{#1}}\fi
\expandafter\ifx\csname urlprefix\endcsname\relax\def\urlprefix{URL }\fi
\expandafter\ifx\csname href\endcsname\relax
  \def\href#1#2{#2} \def\path#1{#1}\fi

\bibitem{Toms1948Some}
B.~A. Toms, Some observations on the flow of linear polymer solutions through straight tubes at large {R}eynolds numbers, in: Proc. 1st Intl. Congr. Rheol., 1948, pp. 135--141.

\bibitem{lu1988carboxymethyl}
C.-F. Lu, C.~A. Lukach, R.~R. Pas, Carboxymethyl guar-based drilling fluids, {U}S Patent 4,743,384 (1988).

\bibitem{thombare2016guar}
N.~Thombare, U.~Jha, S.~Mishra, M.~Z. Siddiqui, Guar gum as a promising starting material for diverse applications: A review, Int. J. Biol. Macromol. 88 (2016) 361--372.

\bibitem{singh2000biodegradable}
R.~P. Singh, G.~P. Karmakar, S.~K. Rath, N.~C. Karmakar, S.~R. Pandey, T.~Tripathy, J.~Panda, K.~Kanan, S.~K. Jain, N.~T. Lan, Biodegradable drag reducing agents and flocculants based on polysaccharides: materials and applications, Polym. Eng. Sci. 40~(1) (2000) 46--60.

\bibitem{burger1982flow}
E.~D. Burger, W.~R. Munk, H.~A. Wahl, Flow increase in the trans {Alaska} pipeline through use of a polymeric drag-reducing additive, J. Petrol. Tech. 34~(02) (1982) 377--386.

\bibitem{li2022effect}
G.~Li, Y.~Sun, X.~Zheng, H.~J. Choi, K.~Zhang, Effect of drag-reducing polymer on blood flow in microchannels, Colloids Surf. B Biointerfaces 209 (2022) 112212.

\bibitem{crompton2020drag}
D.~Crompton, R.~Vats, T.~Pradhan-Sundd, P.~Sundd, M.~V. Kameneva, Drag-reducing polymers improve hepatic vaso-occlusion in {SCD} mice, Blood Adv. 4~(18) (2020) 4333--4336.

\bibitem{bragin2021addition}
D.~E. Bragin, O.~A. Bragina, L.~Berliba, M.~V. Kameneva, E.~M. Nemoto, Addition of drag-reducing polymers to colloid resuscitation fluid enhances cerebral microcirculation and tissue oxygenation after traumatic brain injury complicated by hemorrhagic shock, in: Oxygen Transport to Tissue XLII, Springer, 2021, pp. 283--288.

\bibitem{han2017role}
W.~J. Han, H.~J. Choi, Role of bio-based polymers on improving turbulent flow characteristics: {M}aterials and application, Polymers 9~(6) (2017) 209.

\bibitem{herrchen1997detailed}
M.~Herrchen, H.~C. {\"O}ttinger, A detailed comparison of various {FENE} dumbbell models, J. Non-Newton. Fluid Mech. 68~(1) (1997) 17--42.

\bibitem{barrett2018existence}
J.~W. Barrett, E.~S{\"u}li, Existence of global weak solutions to the kinetic {H}ookean dumbbell model for incompressible dilute polymeric fluids, Nonlinear Anal. Real World Appl. 39 (2018) 362--395.

\bibitem{byron1987dynamics}
R.~B. Bird, C.~F. Curtiss, R.~C. Armstrong, O.~Hassager, Dynamics of polymeric liquids, vol. 2: Kinetic theory (1987).

\bibitem{warner1972kinetic}
H.~R. Warner~Jr, Kinetic theory and rheology of dilute suspensions of finitely extendible dumbbells, Ind. Eng. Chem. Res. 11~(3) (1972) 379--387.

\bibitem{bagley1983theoretical}
R.~L. Bagley, P.~J. Torvik, A theoretical basis for the application of fractional calculus to viscoelasticity, J. Rheol. 27~(3) (1983) 201--210.

\bibitem{rouse1953theory}
P.~E. Rouse, A theory of the linear viscoelastic properties of dilute solutions of coiling polymers, J. Chem. Phys. 21~(7) (1953) 1272.

\bibitem{fritz2024analysis}
M.~Fritz, E.~S{\"u}li, B.~Wohlmuth, Analysis of a dilute polymer model with a time-fractional derivative, SIAM J. Math. Anal. 56~(2) (2024) 2063--2089.

\bibitem{heinsalu2007use}
E.~Heinsalu, M.~Patriarca, I.~Goychuk, P.~H{\"a}nggi, Use and abuse of a fractional {Fokker}-{Planck} dynamics for time-dependent driving, Phys. Rev. Lett. 99~(12) (2007) 120602.

\bibitem{barrett2010existence}
J.~W. Barrett, E.~S{\"u}li, Existence of global weak solutions to {Fokker}-{Planck} and {Navier}-{Stokes}-{Fokker}-{Planck} equations in kinetic models of dilute polymers, Discrete Contin. Dyn. Syst. Ser. 3~(3) (2010) 371--408.

\bibitem{chauviere2004simulation}
C.~Chauvi{\`e}re, A.~Lozinski, Simulation of dilute polymer solutions using a {Fokker}--{Planck} equation, Comput. Fluids 33~(5-6) (2004) 687--696.

\bibitem{strang1968construction}
G.~Strang, On the construction and comparison of difference schemes, SIAM J. Numer. Anal. 5~(3) (1968) 506--517.

\bibitem{cao2015time}
W.~Cao, Z.~Zhang, G.~E. Karniadakis, Time-splitting schemes for fractional differential equations {I}: {S}mooth solutions, SIAM J. Sci. Comput. 37~(4) (2015) A1752--A1776.

\bibitem{Knezevic_Suli_2009}
D.~J. Knezevic, E.~Süli, A heterogeneous alternating-direction method for a micro-macro dilute polymeric fluid model, ESAIM: Math. Model. Numer. Anal. 43~(6) (2009) 1117–1156.

\bibitem{mizerova2018conservative}
H.~Mizerov\'{a}, B.~She, A conservative scheme for the {F}okker-{P}lanck equation with applications to viscoelastic polymeric fluids, J. Comput. Phys. 374 (2018) 941--953.

\bibitem{beddrich2024numerical}
J.~Beddrich, E.~S\"{u}li, B.~Wohlmuth, Numerical simulation of the time-fractional {F}okker-{P}lanck equation and applications to polymeric fluids, J. Comput. Phys. 497 (2024) Paper No. 112598, 18.

\bibitem{GRIEBEL201441}
M.~Griebel, A.~Rüttgers, Multiscale simulations of three-dimensional viscoelastic flows in a square–square contraction, J. Non-Newton. Fluid Mech. 205 (2014) 41--63.

\bibitem{ruttgers2018multiscale}
A.~R{\"u}ttgers, M.~Griebel, Multiscale simulation of polymeric fluids using the sparse grid combination technique, Appl. Math. Comput. 319 (2018) 425--443.

\bibitem{chauviere2002new}
C.~Chauvi{\`e}re, A new method for micro-macro simulations of viscoelastic flows, SIAM J. Sci. Comput. 23~(6) (2002) 2123--2140.

\bibitem{chauviere2003efficient}
C.~Chauvi{\`e}re, A.~Lozinski, An efficient technique for simulations of viscoelastic flows, derived from the {B}rownian configuration field method, SIAM J. Sci. Comput. 24~(5) (2003) 1823--1837.

\bibitem{ye2018numerical}
S.~Ye, Numerical methods for simulating dilute polymeric fluids, Ph.D. thesis, University of Oxford (2018).

\bibitem{cromer2023macro}
M.~Cromer, P.~A. Vasquez, Macro--micro-coupled simulations of dilute viscoelastic fluids, Appl. Sci. 13~(22) (2023) 12265.

\bibitem{oldroyd1950formulation}
J.~G. Oldroyd, On the formulation of rheological equations of state, Proc R Soc Lond A Math Phys Sci 200~(1063) (1950) 523--541.

\bibitem{renardy2021mathematician}
M.~Renardy, B.~Thomases, A mathematician’s perspective on the {O}ldroyd {B} model: Progress and future challenges, J. Nonnewton. Fluid Mech. 293 (2021) 104573.

\bibitem{Lahiri2023drag}
S.~K. Lahiri, K.~Volokh, Drag reduction via polymer solute: {3D} numerical simulations of pipe flow, Acta Mech. 234~(10) (2023) 4523--4533.

\bibitem{dellar2014lattice}
P.~J. Dellar, Lattice {B}oltzmann formulation for linear viscoelastic fluids using an abstract second stress, SIAM J. Sci. Comput. 36~(6) (2014) A2507--A2532.

\bibitem{sousa2011effect}
P.~C. Sousa, P.~M. Coelho, M.~S.~N. Oliveira, M.~A. Alves, Effect of the contraction ratio upon viscoelastic fluid flow in three-dimensional square–square contractions, Chem. Eng. Sci. 66~(5) (2011) 998--1009.

\bibitem{yu2004numerical}
B.~Yu, F.~Li, Y.~Kawaguchi, Numerical and experimental investigation of turbulent characteristics in a drag-reducing flow with surfactant additives, Int. J. Heat Fluid Flow 25~(6) (2004) 961--974.

\bibitem{DIMITROPOULOS1998433}
C.~D. Dimitropoulos, R.~Sureshkumar, N.~Beris~Antony, Direct numerical simulation of viscoelastic turbulent channel flow exhibiting drag reduction: {E}ffect of the variation of rheological parameters, J. Non-Newton. Fluid Mech. 79~(2) (1998) 433--468.

\bibitem{alves2021numerical}
M.~A. Alves, P.~J. Oliveira, F.~T. Pinho, Numerical methods for viscoelastic fluid flows, Annu. Rev. Fluid Mech. 53~(1) (2021) 509--541.

\bibitem{shen2011spectral}
J.~Shen, T.~Tang, L.-L. Wang, Spectral methods: {A}lgorithms, analysis and applications, Vol.~41, Springer Science \& Business Media, 2011.

\bibitem{diethelm2010analysis}
K.~Diethelm, N.~Ford, The analysis of fractional differential equations, Springer, 2010.

\bibitem{khristenko2023solving}
U.~Khristenko, B.~Wohlmuth, Solving time-fractional differential equations via rational approximation, IMA J. Numer. Anal. 43~(3) (2023) 1263--1290.

\bibitem{bernstein1929fonctions}
S.~Bernstein, Sur les fonctions absolument monotones, Acta Math. 52~(1) (1929) 1--66.

\bibitem{nakatsukasa2018aaa}
Y.~Nakatsukasa, O.~S{\`e}te, L.~N. Trefethen, The {AAA} algorithm for rational approximation, SIAM J. Sci. Comput. 40~(3) (2018) A1494--A1522.

\bibitem{duswald2024finite}
T.~Duswald, B.~Keith, B.~Lazarov, S.~Petrides, B.~Wohlmuth, Finite elements for {M}at{\'e}rn-type random fields: Uncertainty in computational mechanics and design optimization, Comput. Meth. Appl. Mech. Eng. 429 (2024) 117146.

\bibitem{hetland2023solving}
B.~Hetland, E.~Jettestuen, A.~Hiorth, Solving the constitutive equation of dilute polymeric flows: A general {Fokker}--{Planck} approach for linear elastic dumbbell models, Phys. Fluids 35~(9) (2023).

\bibitem{mohammadi2015hermite}
M.~Mohammadi, A.~Borzi, A {H}ermite spectral method for a {Fokker}-{Planck} optimal control problem in an unbounded domain, Int. J. Uncertain. Quantif. 5~(3) (2015).

\bibitem{franco2020high}
M.~Franco, J.-S. Camier, J.~Andrej, W.~Pazner, High-order matrix-free incompressible flow solvers with {GPU} acceleration and low-order refined preconditioners, Comput. Fluids 203 (2020) 104541.

\bibitem{tomboulides1997numerical}
A.~G. Tomboulides, J.~C.~Y. Lee, S.~A. Orszag, Numerical simulation of low {M}ach number reactive flows, J. Sci. Comput. 12 (1997) 139--167.

\bibitem{chorin1967numerical}
A.~J. Chorin, A numerical method for solving incompressible viscous flow problems, J. Comput. Phys. 2~(1) (1967) 12--26.

\bibitem{chorin1968numerical}
A.~J. Chorin, Numerical solution of the {Navier}-{Stokes} equations, Math. Comput. 22~(104) (1968) 745--762.

\bibitem{guermond2006overview}
J.-L. Guermond, P.~Minev, J.~Shen, An overview of projection methods for incompressible flows, Comput. Meth. Appl. Mech. Eng. 195~(44-47) (2006) 6011--6045.

\bibitem{karniadakis1991high}
G.~E. Karniadakis, M.~Israeli, S.~A. Orszag, High-order splitting methods for the incompressible {Navier}-{Stokes} equations, J. Comput. Phys. 97~(2) (1991) 414--443.

\bibitem{mfem}
R.~Anderson, J.~Andrej, A.~Barker, J.~Bramwell, J.-S. Camier, J.~Cerveny, V.~Dobrev, Y.~Dudouit, A.~Fisher, T.~Kolev, W.~Pazner, M.~Stowell, V.~Tomov, I.~Akkerman, J.~Dahm, D.~Medina, S.~Zampini, {MFEM}: A modular finite element methods library, Comput. Math. Appl. 81 (2021) 42--74.

\bibitem{kexue2011laplace}
L.~Kexue, P.~Jigen, Laplace transform and fractional differential equations, Appl. Math. Lett. 24~(12) (2011) 2019--2023.

\bibitem{nishikawa2019large}
H.~Nishikawa, On large start-up error of {BDF2}, J. Comput. Phys. 392 (2019) 456--461.

\bibitem{schafer1996benchmark}
M.~Sch{\"a}fer, S.~Turek, F.~Durst, E.~Krause, R.~Rannacher, Benchmark computations of laminar flow around a cylinder, Springer, 1996.

\bibitem{fritz2022time}
M.~Fritz, M.~L. Rajendran, B.~Wohlmuth, Time-fractional {C}ahn--{H}illiard equation: Well-posedness, degeneracy, and numerical solutions, Comput. Math. Appl. 108 (2022) 66--87.

\bibitem{fessler2008pipeline}
R.~R. Fessler, Pipeline corrosion, Report, US Department of Transportation Pipeline and Hazardous Materials Safety Administration, Baker, Evanston, IL (2008).

\bibitem{vanaei2017review}
H.~R. Vanaei, A.~Eslami, A.~Egbewande, A review on pipeline corrosion, in-line inspection (ili), and corrosion growth rate models, Int. J. Press. Vessels Pip. 149 (2017) 43--54.

\bibitem{liu2016new}
C.-Q. Liu, Y.-Q. Wang, Y.~Yang, Z.-W. Duan, New omega vortex identification method, Sci. China Phys. Mech. Astron. 59 (2016) 1--9.

\bibitem{xi2019turbulent}
L.~Xi, Turbulent drag reduction by polymer additives: Fundamentals and recent advances, Phys. Fluids 31~(12) (2019).

\bibitem{dong2018determination}
X.-R. Dong, Y.-Q. Wang, X.-P. Chen, Y.~Dong, Y.-N. Zhang, C.~Liu, Determination of epsilon for {Omega} vortex identification method, J. Hydrodyn. 30 (2018) 541--548.

\bibitem{housiadas2005direct}
K.~D. Housiadas, A.~N. Beris, Direct numerical simulations of viscoelastic turbulent channel flows at high drag reduction, Korea-Aust. Rheol. J. 17~(3) (2005) 131--140.

\bibitem{wapperom2000backward}
P.~Wapperom, R.~Keunings, V.~Legat, The backward-tracking {Lagrangian} particle method for transient viscoelastic flows, J. Non-Newton. Fluid Mech. 91~(2) (2000) 273--295.

\bibitem{graham2014drag}
M.~D. Graham, Drag reduction and the dynamics of turbulence in simple and complex fluids, Phys. Fluids 26~(10) (2014).

\bibitem{kim2007effects}
K.~Kim, C.-F. Li, R.~Sureshkumar, S.~Balachandar, R.~J. Adrian, Effects of polymer stresses on eddy structures in drag-reduced turbulent channel flow, J. Fluid Mech. 584 (2007) 281--299.

\bibitem{samanta2013elasto}
D.~Samanta, Y.~Dubief, M.~Holzner, C.~Sch{\"a}fer, A.~N. Morozov, C.~Wagner, B.~Hof, Elasto-inertial turbulence, Proc. Nat. Acad. Sci. 110~(26) (2013) 10557--10562.

\bibitem{dallas2010strong}
V.~Dallas, J.~C. Vassilicos, G.~F. Hewitt, Strong polymer-turbulence interactions in viscoelastic turbulent channel flow, Phys. Rev. E 82~(6) (2010) 066303.

\bibitem{shekar2019critical}
A.~Shekar, R.~M. McMullen, S.-N. Wang, B.~J. McKeon, M.~D. Graham, Critical-layer structures and mechanisms in elastoinertial turbulence, Phys. Rev. Lett. 122~(12) (2019) 124503.

\bibitem{dubief2023elasto}
Y.~Dubief, V.~E. Terrapon, B.~Hof, Elasto-inertial turbulence, Annu. Rev. Fluid Mech. 55~(1) (2023) 675--705.

\bibitem{bhave1991kinetic}
A.~V. Bhave, R.~C. Armstrong, R.~A. Brown, Kinetic theory and rheology of dilute, nonhomogeneous polymer solutions, J. Chem. Phys. 95~(4) (1991) 2988--3000.

\end{thebibliography}

\end{document}